\newtheorem{theorem}{Theorem}[section]
\newtheorem{construction}[theorem]{Construction}
\newtheorem{corollary}[theorem]{Corollary}
\newtheorem{definition}[theorem]{Definition}
\newtheorem{question}[theorem]{Question}
\newtheorem{example}[theorem]{Example}
\newtheorem{lemma}[theorem]{Lemma}
\newtheorem{proposition}[theorem]{Proposition}
\newtheorem{remark}[theorem]{Remark}
\newcommand{\prskip}{\vspace{8pt}} 
\newcommand{\thmskip}{\vspace{10pt}} 
\newcommand{\pfskip}{\vspace{6pt}} 
\newcommand{\sectskip}{\vspace{50pt}} 
\newcommand{\introskip}{\vspace{25pt}} 
\newcommand{\Z}{\mathbb{Z}}
\newcommand\Monu{\mathbf{Mon}_1}
\newcommand\Zpos{\Z_{>0}}
\newcommand\vhat{\widehat{v}}
\newcommand\Vhat{\widehat{V}}
\DeclareMathOperator\Hom{Hom}
\begin{document}

\title{Regular Ring Properties Degraded \\ Through Inverse Limits}  \vspace{5mm}

\author{Pere Ara}
\address{Pere Ara,
        Departament de Matem\`{a}tiques,
        Universitat Aut\`{o}noma de Barcelona,
        08193 Bellaterra, Barcelona, Spain, and
        Centre de Recerca Matem\`atica, Edifici Cc, Campus de Bellaterra, 08193 Cerdanyola del Vall\`es, Barcelona, Spain}
\email{pere.ara@uab.cat}

\author{\ Ken  Goodearl  }
\address{Department of Mathematics, University of California, Santa Barbara, CA 93106, USA}
\email{goodearl@math.ucsb.edu}

\author{\ Kevin C. O'Meara}
\address{University of Canterbury, Christchurch, New Zealand}
\email{staf198@uclive.ac.nz}

\author{\ Enrique Pardo}
\address{Departamento de Matem\'aticas, Facultad de Ciencias, Universidad de C\'adiz, Campus de Puerto Real, 11510 Puerto Real (C\'adiz), Spain}
\email{enrique.pardo@uca.es}

\author{\ Francesc Perera}
\address{
F.~Perera,
Departament de Matem\`{a}tiques,
Universitat Aut\`{o}noma de Barcelona,
08193 Bellaterra, Barcelona, Spain, and
Centre de Recerca Matem\`atica, Edifici Cc, Campus de Bellaterra,  08193 Cerdanyola del Vall\`es, Barcelona, Spain}
\email[]{francesc.perera@uab.cat}
\urladdr{https://mat.uab.cat/web/perera}

\keywords{exchange rings, inverse limit, regular rings, refinement, (strong) separativity}

\subjclass[2020]{Primary 16E50, Secondary 16E20, 18A30, 20M14}

\vspace{10mm}

\begin{abstract}
We give a number of constructions where inverse limits seriously degrade properties of regular rings, such as unit-regularity, diagonalisation of matrices, and finite stable rank. This raises the possibility of using inverse limits to answer the long standing Separativity Problem (in the negative).
\end{abstract}

\maketitle

The motivation for this work is to explore inverse limits as a new tool to settle in the negative the Separativity Problem (SP) for (von Neumann) regular rings. This problem, which was posed by Ara, Goodearl, O'Meara, and Pardo in 1994, asks if all regular rings (or exchange rings) $R$ satisfy the property
\[
    A \ \oplus \ A \  \cong \  A \ \oplus \ B \ \cong \ B \ \oplus \ B \ \Longrightarrow \ A \ \cong \ B
\]
for finitely generated (f.g.) projective $R$-modules $A, B$. No counter constructions have worked to date, so why not try a new one! While a resolution of the SP using this new tool has so far eluded us, the constructions and results in this paper do confirm that inverse limits can seriously degrade regular ring properties. For instance, building on a construction of Bergman in the 1970s, and modified by O'Meara in 2017, we construct an inverse limit of unit-regular rings which remains regular but is no longer unit-regular. Thus we have degraded cancellation of f.g.\ projectives but without destroying regularity. However, these degradations do require certain restrictions on the connecting maps $f_i: R_{i+1} \rightarrow R_i$ in $\varprojlim R_i$, such as not being surjective. All this gives added urgency to the question of whether \emph{an inverse limit can also degrade the property of separativity?}
\prskip

Recall a ring $R$ is \emph{regular} if each $a \in R$ has an inner inverse $b$ in the sense that $a = aba$. If $b$ can always be chosen to be a unit, then $R$ is \emph{unit-regular}. And $R$ is an \emph{exchange} ring if its f.g.\ projective modules possess the finite exchange property. Exchange rings are more general than regular rings (for instance they include all locally finite-dimensional algebras) but share many properties with regular rings, such as an abundance of idempotents and, most notably, the refinement property for direct sums of f.g.\ projective modules. For simplicity, however, we restrict our discussions to mostly regular rings even though much of what we do can also be done for exchange rings.
\prskip

Here is a quick outline of the paper. Section 1 reminds us of where separativity fits within regular rings: it is a much more general, unifying notion than first appears, and the resolution of the SP has important ramifications. In Section 2, we recall the basics of inverse limits, while in Section 3 we place our approach in the framework of varieties in the universal algebra sense. This is a powerful point of view because of the ability to take free objects in a variety. Constructions illustrating how inverse limits can degrade properties are given in Section 4, along with some positive results in Section 5 when the connecting maps are surjective. In Section 6, we relate inverse limits of regular rings $R$ to inverse limits of their monoids $V(R)$ of f.g.\ projectives. A most instructive example is given in Section 7 of a regular inverse limit $R$ of regular rings $R_i$ where, despite the connecting maps being surjective, $V(R)$ is not isomorphic to $\varprojlim V(R_i)$. The construction involves graph algebras, and appeals to some nontrivial results within that area.  Finally, in Section 8, we examine an intermediate step in constructing a non-separative regular ring.
\thmskip

\section{Separativity}

The notion itself stems from semigroup theory in the 1950s. For regular rings, we view separativity in an equivalent form (to the definition in the Introduction) of a broad cancellation property for f.g.\ projective modules, akin to those associated with finite stable rank:
\[
A \ \oplus \ C \  \cong \  B \ \oplus \ C \ \Longrightarrow \ A \ \cong \ B
\]
for f.g.\ projective $R$-modules $A,B,C$ when $C$ is isomorphic to both a direct summand of a finite direct sum of copies of $A$ and of a finite direct sum of copies of $B$. See \cite[Lemma 2.1]{AGOP1}. (We can never expect universal cancellation because not all regular rings are unit-regular.) Another interesting equivalent view of separativity is that ``multi-isomorphism'' of f.g.\ projective modules coincides with isomorphism:
\[
   A^n \ \cong \ B^n \ \  \forall n > 1 \ \Longrightarrow \ A \ \cong \ B.
\]

There were three major theorems established in the 1990s concerning separative regular rings:
\prskip

\noindent $\bullet$ \textbf{Extension Theorem.}  Separativity for regular (or exchange) rings is preserved in extensions of ideals $I$ (as non-unital rings) by factor rings $R/I$: $R$ is separative iff $I$ and $R/I$ are separative.
\prskip

\noindent $\bullet$ \textbf{Diagonalisation Theorem.}  Square matrices $A$ over separative regular rings are equivalent to diagonal matrices: $PAQ = D$ for some invertible $P, Q$ and diagonal $D$.
\prskip

\noindent $\bullet$ \textbf{GE Theorem.}  Invertible matrices over separative regular rings are products of elementary matrices.
\prskip

\noindent Moreover, among regular rings $R$, the separative ones are characterised by the property that $2 \times 2$ matrices over corner rings $eRe$ (where $e$ is an idempotent) can be diagonalised by elementary matrices. See  \cite{AGOP1}, \cite{AGOP2}, and \cite{AGOR} for more detail.
\prskip

It is the Extension Theorem that makes life difficult for us in directly constructing non-separative regular rings. However, this does not appear to be an impeding factor in inverse limit constructions, because regularity itself is closed under extensions whereas we construct many non-regular inverse limits of regular rings.
\thmskip

\section{The basics of inverse limits}\label{Section:InverseLimits}
\thmskip

Inverse limits are the dual of direct limits. However, inverse limits can present a greater challenge to our intuition in predicting what a particular limit might look like! Roughly speaking, just as we can think of a direct limit as a \emph{``fancy type of union''}, we can think of an inverse limit as a \emph{``fancy type of intersection''}. Two excellent references for these limits, as well as universal algebra generally, are George Bergman's book \cite[Chapter 9]{GBbook}  and Nathan Jacobson's book \cite[Chapter 2]{BasicAlgebra}.
\prskip

For simplicity, we will restrict ourselves to the indexing set $\mathbb{Z}_{>0}$. But we certainly don't rule out much more complex (even uncountable) directed sets playing a critical role. Given objects $A_i$ in some category, and  connecting morphisms $f_i\colon A_{i+1}\to A_i$, the \emph{inverse limit} $\varprojlim A_i$ is an object $L$ in the category, together with morphisms $\pi_i\colon L\to A_i$, such that $f_i\circ\pi_{i+1}=\pi_i$, which satisfy the universal property shown in the picture:

\[
\xymatrix{\cdots & A_i \ar[l] & A_{i+1} \ar[l]_{f_i} & \cdots \ar[l]_{f_{i+1}}  \\
     & L \ar[u]^{\pi_i} \ar@{>}[ru]_{\pi_{i+1}}   \\
     & M \ar@{.>}[u]^{\theta} \ar@{>}[ruu]_{\rho_{i+1}} }
\]

\noindent where $\rho_i: M \rightarrow A_i$ also satisfy $f_i \circ\rho_{i+1} = \rho_i$, and the map $\theta$ is unique.

In a general (universal) algebra, we can take
\[
\varprojlim A_i := \biggl\{(a_i)_{i\in \Z_{>0}}\in \prod_{i\in \Z_{>0}}A_i\, :\, f_i(a_{i+1})=a_i \text{ for each }i\in \Z_{>0}\biggr\}
\]
and take $\pi_i((a_i)_{i\in \Z_{>0}})=a_i$. This set might be empty, but it is automatically nonempty if there is some 0-ary operation on the algebras.
\prskip

The prototype of an inverse limit $\varprojlim R_i$ in the category of rings is the intersection of a descending chain $R_1 \ \supseteq  \ R_2 \ \supseteq \  R_3 \ \supseteq \ \ldots$ of subrings, with the connecting maps the inclusion maps. Moreover, in a sense to be made clear later (see Proposition \ref{P:Inj}), all inverse limits of rings can be viewed this way.
\prskip

Any direct product $\prod_{i = 1}^{\infty}\,R_i$ of rings $R_i$ can be viewed as an inverse limit $\varprojlim\,S_i$ of the rings $S_i = R_1 \times R_2 \times \ \cdots  \times\ R_i$ with connecting maps $f_i: S_{i+1} \rightarrow S_i$ the natural projections.
\prskip

The classical, and most instructive, example of an inverse limit is the ring of $p$-adic integers constructed from the rings $\mathbb{Z}/p^i$ using the natural connecting maps. This is an example of an inverse limit of exchange rings with surjective connecting maps and the inverse limit is also an exchange ring (being a local ring). So that is encouraging.
\sectskip


\section{A variety setting}

\introskip
The use of certain varieties (in the universal algebra sense) as a general framework for our inverse limits proves most useful. Among other things it allows us access to free objects, such as a free separative regular ring on given generators. Recall, a variety is determined by a class of algebraic objects, a set of operations, and universal identities satisfied under those operations. Thus we have the variety \textbf{Ring} of rings with unity relative to the operations $+, -, \cdot, 0, 1$. So two binary operations, one unary, and two nullary (constants). And satisfying the usual identities such as the distributive law $a\cdot(b + c) = a \cdot b + a \cdot c$ for all $a, b, c \in R$. By throwing in the unary operation $'$ with identity $a = aa'a$ we get the variety $\textbf{Reg}$ of regular rings. To get the variety \textbf{UnitReg} of unit-regular rings, we can impose the extra two identities $a'(a')' = 1$ and $(a')'a'= 1$ on \textbf{Reg}.  Two other important varieties for us are \textbf{DiagReg} and \textbf{SepReg}. The former encapsulates diagonalisation of $2 \times 2$ matrices {\small{$\left[\begin{array}{cc}
            w & x \\
            y & z
            \end{array}\right]$ }}
over a regular ring $R$ by elementary matrices (3 each side):
\[
 \left[\begin{array}{cc}
            1 & c \\
            0 & 1
            \end{array}\right]
\left[\begin{array}{cc}
            1 & 0 \\
            b & 1
            \end{array}\right]
\left[\begin{array}{cc}
            1 & a \\
            0 & 1
            \end{array}\right]
\left[\begin{array}{cc}
            w & x \\
            y & z
            \end{array}\right]
 \left[\begin{array}{cc}
            1 & d \\
            0 & 1
            \end{array}\right]
\left[\begin{array}{cc}
            1 & 0 \\
            e & 1
            \end{array}\right]
\left[\begin{array}{cc}
            1 & f \\
            0 & 1
            \end{array}\right] \\
\]
\[                         = \ \
\left[\begin{array}{cc}
            \ast & 0\\
            0 & \ast
            \end{array}\right].
\]
So given a 4-tuple $(w,x,y,z)$ of elements of $R$, we can pick out the entries $a, b, c, d, e, f$ in the elementary matrices in terms of six quaternary-operations, and then specify that the $(1,2)$ and $(2,1)$ entries of the product are zero using the obvious multilinear equations. Of course, we still need the unary operation $'$ of inner inverse to ensure $R$ is regular but notice \emph{the diagonalisation itself does not involve the inner inverse $'$}. To describe the variety \textbf{SepReg} of separative regular rings in simple terms (at least conceptually), we use the characterisation mentioned before of separative regular rings in terms of diagonalising $2 \times 2$ matrices over corner rings. But with a nice recent improvement in \cite{AGNPP}: only 3 elementary matrices are needed on each side in the diagonalisation, and nothing less on each side will work in general. Fortunately also, we can parametrise idempotents as  elements of the form $vv' + v(1-vv')$ as $v$ ranges over the members of $R$. Again see \cite{AGNPP}. So combined with our description of \textbf{DiagReg}, this makes $\textbf{SepReg}$ a variety. Notice that this viewpoint \emph{does not prioritise one inner inverse operation over another}. As shown in \cite{AGNPP} it is possible also to view the class of exchange rings as a variety but this is much more delicate.
\prskip

A most surprising result obtained in \cite[Theorem 4.6]{AGNPP} is that it is possible to view \textbf{SepReg} as a \emph{subvariety} of $\textbf{Reg}$ by keeping the same signature of being a ring with a particular inner inverse operation $'$, but imposing an additional equation involving four recursive (layered) applications of $'$ within a quaternary regular ring expression to get the necessary diagonal reduction. However, this approach does require some nontrivial technicalities, and the additional defining identity is not exactly obvious.
\prskip

When we talk about an inverse limit in a given variety, we assume the connecting maps preserve \emph{all} the operations of the variety, in which case the limit also resides in that variety. So in the case of an inverse system $(R_i,f_i)$ in \textbf{Reg}, each $R_i$ is equipped with a particular inner inverse operation $'$, and the connecting maps $f_i: R_{i+1} \rightarrow R_i$ are ring homomorphisms satisfying $f_i(a') = (f_i(a))'$ for all $a \in R_{i+1}$.  Clearly, if we work in \textbf{SepReg}, then the resulting inverse limit will also be in {\bf SepReg}.  However, if we are in \textbf{Ring} or \textbf{Reg}, then there is some hope that the inverse limit might fail to be separative.
\thmskip


\section{Constructions where properties are degraded}

\thmskip

One of the earliest examples of how inverse limits can degrade properties of a regular ring was given by George Bergman in the 1970s and recorded in \cite[Example 1.10]{VNRR}.
\thmskip

\begin{construction}\label{Constr:not reg}
An inverse limit of regular rings taken in \emph{\textbf{Ring}} that is not regular.
\end{construction}
\prskip

\noindent Let $R$ be a ring, and let $S$ be the subring of $R^{\Z_{>0}}$ consisting of all sequences that eventually stabilize:
\[
S:=\{(x_i)_{i\in \Z_{>0}}\in R^{\Z_{>0}} \, :\, x_i = x_{i+1} = \cdots \ \mbox{for some}  \ i > 0\}.
\]
Let $\varphi$ be an automorphism of $R$.  The sequences in $S$ are restricted in their tails, but using $\varphi$ we can restrict these sequences from the start.  Namely, we can pass to the subring
\[
S_n :=\{(x_i)_{i\in \Z_{>0}}\in S\, :\, \varphi(x_i)=x_{i+1}\ \text{if $i\leq n$}\}.
\]
In this way we obtain a sequence of subrings, $S\supseteq S_1 \supseteq S_2 \supseteq \ldots$, each isomorphic to $S$, and the inverse limit $\varprojlim S_i$ of these subrings, via the inclusion maps, is isomorphic to the subring of $R$ fixed by $\varphi$.
\prskip

Now specialise the construction to $R = M_2(F)$ for a fixed field $F$ and the inner automorphism $\varphi$ of conjugation by
\[
C \ = \ \left[\begin{array}{cc}
1 & 1\\
0 & 1\end{array}\right].
\]
Inasmuch as the centraliser $T$ of $C$ in $R$ is
 \[
T \ = \ \left\{\left[\begin{array}{cc}
a & b\\
 0 & a\end{array}\right]\, :\, a,b\in F\right\}\cong F[x]/\langle x\rangle^2,
\]
which has a nonzero nilpotent ideal, $T$ cannot be regular. Therefore $\varprojlim S_i$ is not regular. \hfill $\square$
\thmskip

In the above construction, the connecting maps (inclusions) cannot preserve fixed inner inverse operations on each $R_i$. Otherwise, the limit takes place in the variety \textbf{Reg} and therefore would be regular!
\prskip

This type of construction can be generalized.  Instead of using one automorphism, we could use up to countably many, as follows.  Fix, once and for all, a map $\sigma\colon \Z_{>0}\to \Z_{>0}$ with the property that each positive integer occurs as an output infinitely often.  (For instance, $\sigma$ could send a number to the sum of its binary digits.)  Let $\theta_1,\theta_2,\ldots$ be a sequence (either finite or countable) of automorphisms of $R$.  We then define subrings of $S$, by setting
\[
S_n :=\{(x_i)_{i\in \Z_{>0}}\in S\, :\, \theta_{k}(x_{i})=x_{i+1}\ \text{if $i\leq n$ and $\sigma(i)=k$}\}.
\]
Now, the inverse limit is (isomorphic to) the subring of $R$ fixed by all of the $\theta_k$.
\prskip

\begin{construction}
There exists an inverse limit of unit-regular rings, taken in \textup{\bf Ring}, with the property that some $2\times 2$ matrix over the ring is not diagonalizable.
\end{construction}

Consider the ring $R:=M_{4}(F)$ with $F$ a field.  Let $\theta_1$, $\theta_2$, and $\theta_3$ be conjugations by the respective elementary matrices
{\small{\[
 I_4+e_{13} = \left[\begin{array}{cccc}
1 & 0 & 1 & 0 \\
  & 1 & 0 & 0 \\
  &   & 1 & 0\\
  &   &   & 1
\end{array}\right],  \ \
I_4+e_{14} = \left[\begin{array}{cccc}
1 & 0 & 0 & 1\\
  & 1 & 0 & 0\\
  &   & 1 & 0\\
  &   &   & 1
\end{array}\right], \  \ \
I_4+e_{23} = \left[\begin{array}{cccc}
1 & 0 & 0 & 0\\
  & 1 & 1 & 0\\
  &   & 1 & 0\\
  &   &   & 1
\end{array}\right].
\]}}
\noindent The subring of $R$ fixed by these three automorphisms is
\[
\left\{\left[\begin{array}{cc|cc}
a & 0 & b & c\\
  & a & d & e\\ \hline
  &   & a & 0\\
  &   &   & a\\
\end{array}\right]\, :\, a,b,c,d,e\in F \right\}\cong F[x_1,x_2,x_3,x_4]/\langle x_1,x_2,x_3,x_4\rangle^2.
\]
By \cite[Example 2.12]{AGOR}, there are $2\times 2$ matrices over this ring that cannot be diagonalized. The ring $R$ occurs as an inverse limit of unit-regular rings, in the variety {\bf Ring}, for the same reason as given in Construction \ref{Constr:not reg}. \hfill $\square$
\prskip

In the previous two constructions, the automorphisms we employed were conjugations.  In our next construction we will require an outer automorphism.
\prskip

\begin{construction} \label{Constr:not unit regular}
There is an inverse limit of unit-regular rings, taken in \textup{\bf Reg}, that is not unit-regular \textup{(}but is automatically regular\textup{)}.
\end{construction}

This construction is also based on an earlier classical one of Bergman in the 1970s and recorded in \cite[Example 5.12]{VNRR}. It exhibits a regular subalgebra $R$ of a unit-regular algebra $Q$ (over an arbitrary field $F$) but with $R$ not unit-regular. It is not possible to realize $Q$ inside a countably-infinite matrix ring over $F$ if all the matrices are column-finite or all row-finite. However, in \cite{O2} it was shown how to realise $Q$ and $R$ inside an $\omega \times \omega$ matrix ring when viewed as a Morita context. We briefly recall the details.
\prskip

Fix a field $F$. Let $B$ be the formal $2 \times 2$ matrix ring
\[
  B \ = \ \left[\begin{array}{cc}
    U & M \\
    N & V
    \end{array}\right]
\]
where $U$ is the ring of all $\omega \times \omega$ row-finite matrices over $F$, \ $M$ is the $F$-space of all  $\omega \times \omega$ matrices over $F$, \ $V$ is the ring of all $\omega \times \omega$ column-finite matrices over $F$, and $N$ is the space of all $\omega \times \omega$    matrices with only finite many nonzero entries. Establishing that $B$ is actually a ring is a delicate matter (!). See \cite{O2}. The key to the construction is that there is a natural copy $K$ of the Laurent series ring $F[t,t^{-1}]$ inside $B$ given by
\[
     b_mt^{-m} + \cdots + b_1t^{-1} + c_0 + c_1t + c_2t^2 + \cdots + c_nt^n + \cdots
\]
\ \ \ $\longmapsto$
{\small{\[
                       \left[\begin{array}{c c c c c c c|c c c c c c c}
                            c_0 & b_1 & b_2 &\cdots & b_m &0&\cdots     & c_0 & c_1 & c_2 & \cdots & c_n & \cdots &\\
                            c_1 & c_0 & b_1 & b_2  & \cdots &b_m &\ddots& c_1 & c_2 & \cdots & c_n  & \cdots & &\\
                            c_2 & c_1 & c_0 & b_1 & b_2 &\cdots& \ddots & c_2 & \cdots & c_n & \cdots &  &   &\\
                           \vdots &\ddots&\ddots&\ddots&\ddots&\ddots&  &\vdots  &  &  &  &  &  & \\
                           c_n    &  &  &  &  &          &              &c_n  &  &  &  &  &  & \\
                           \vdots & c_n &  &  &  &         &            &\vdots  &  &  &  &  &  & \\
                             &     &\ddots     &  &  &  &               &  &  &  &  &  &  & \\  \hline
                           b_0 & b_1 & b_2 & \cdots & b_m & 0 & \cdots & c_0 & c_1 & c_2 &\cdots&c_n    &\cdots&  \\
                           b_1 & b_2 & \cdots & b_m & 0 & \cdots&      & b_1 & c_0 & c_1 & c_2 &\cdots& c_n &\cdots\\
                           b_2 & \cdots & b_m & 0 & \cdots & &         & b_2 & b_1 & c_0 & c_1 & c_2 &\cdots&\ddots \\
                          \vdots&  &  &  &  & &                        &\vdots&\ddots&\ddots&\ddots&\ddots&\ddots   \\
                          b_m  &  &  &  &  & &                         & b_m &  &  &  &  & &  \\
                          0  &   &  &  &  & &                          & 0   & b_m &  &  &  &  & \\
                          \vdots & & & & &  &                          &\vdots&\ddots&\ddots& & & &\\
                             \end{array}\right].
\]}}
Also $B$ is a prime ring with nonzero socle
\[
  P \ = \ \left[\begin{array}{cc}
    I & IM+MJ \\
    N & J
    \end{array}\right]
\]
where $I$ and $J$ are the socles of $U$ and $V$ respectively. Let $\pi: B \rightarrow B$ be the projection onto the diagonal
\[
   \left[\begin{array}{cc}
    u & m \\
    n & v
    \end{array}\right]
 \ \mapsto \ \\
 \left[\begin{array}{cc}
    u & 0 \\
    0 & v
    \end{array}\right].
\]
This is a ring homomorphism modulo $P$.
We set
  \[
  R \ = \ \left[\begin{array}{cc}
    I & 0 \\
    0 & J
    \end{array}\right]  +  \pi(K), \ \
  Q \ = \ R \, + \, P \ =\,P \, + \, \pi(K).
\]
As shown in \cite{O2}, the ring $R$ is regular, but not unit-regular, while the ring $Q$ is unit-regular.
\prskip

For the remainder of this construction, assume that $F\neq \mathbb{Z}/2$, and fix $\alpha\in F-\{0,1\}$.  Let $\theta$ be the automorphism of $B$ induced by conjugation by the matrix {\Small{$\left[\begin{array}{cc} \alpha & 0\\ 0 & 1\end{array}\right]$}}.  It is easy to see that $\theta$ restricts to an outer automorphism of $Q$, whose fixed subring is $R$. By applying the technique in Construction 4.1, we see $R$ occurs as an inverse limit of unit-regular rings in {\bf Ring}.  Our next lemma shows that the inverse limit may be forced to occur in {\bf Reg}. \hfill $\square$
\prskip

\begin{lemma}\label{L:Inversreg}
Assume that $(R_i,f_i)_{i\in \Z_{>0}}$ is an inverse system in the variety {\bf Ring}, where each homomorphism $f_i$ is injective, and where each ring $R_i$ is regular.  If the inverse limit in \textup{\bf Ring} happens to be regular, then we can define inner inverse operations on each of the rings $R_i$ so that the inverse limit occurs in \textup{\bf Reg}.
\end{lemma}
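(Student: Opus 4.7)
The plan is to stratify each $R_i$ by ``depth'' in the inverse system and to define the inner inverse on each stratum by transferring a pre-chosen operation. Let $L:=\varprojlim R_i$, and for $j\geq i$ write $h_{ji}:=f_i\circ f_{i+1}\circ\cdots\circ f_{j-1}\colon R_j\to R_i$. I would first note that injectivity of the $f_i$ forces each $h_{ji}$ and each projection $\pi_i\colon L\to R_i$ to be injective: if $(a_k)\in L$ has $a_i=0$, apply the $f_k$ downward to get $a_k=0$ for $k<i$, and use injectivity of the $f_k$ upward to get $a_k=0$ for $k>i$. Set $T_i:=\pi_i(L)\subseteq R_i$ and $M_j:=R_j\setminus f_j(R_{j+1})$, the elements ``newly introduced'' at level $j$.

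The structural fact that drives everything is the disjoint decomposition
\[
R_i \;=\; T_i \;\sqcup\; \bigsqcup_{j\geq i} h_{ji}(M_j),
\]
obtained by assigning each $a\in R_i\setminus T_i$ its maximal depth---the largest $j\geq i$ with $a\in h_{ji}(R_j)$---whose unique preimage in $R_j$ then lies in $M_j$ by maximality (if it were $f_j(c)$, then $a=h_{j+1,i}(c)$, contradicting maximality). Now fix, once and for all, an inner inverse operation $'$ on $L$ (available because $L$ is regular by hypothesis) and, for each $j\geq 1$ and each $a\in M_j$, a choice of inner inverse of $a$ in $R_j$ (available by regularity of $R_j$). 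Define $'$ on $R_i$ by transfer: $\pi_i(\widetilde a)'\!:=\pi_i(\widetilde a\,')$ on $T_i$, and $h_{ji}(b)'\!:=h_{ji}(b')$ on $h_{ji}(M_j)$, where $b\in M_j$ is the unique preimage.

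Verifying $a=aa'a$ in each $R_i$ is then immediate because $\pi_i$ and $h_{ji}$ are ring homomorphisms, so the defining identity transports: $aa'a=h_{ji}(bb'b)=h_{ji}(b)=a$. The compatibility $f_i(b')=f_i(b)'$ needed to turn each $f_i$ into a morphism in \textbf{Reg} splits into two cases by the partition applied to $b\in R_{i+1}$: if $b\in T_{i+1}$, both sides equal $\pi_i(\widetilde b\,')$ using $f_i\circ\pi_{i+1}=\pi_i$; if $b\in h_{j,i+1}(M_j)$ with $j\geq i+1$, both sides equal $h_{ji}(c')$ where $c=h_{j,i+1}^{-1}(b)\in M_j$, using $f_i\circ h_{j,i+1}=h_{ji}$. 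The main obstacle is combinatorial---making the stratification precise so that the definitions on all the $R_i$ are globally coherent---but once that is in place the verification is mechanical. The inverse limit taken in \textbf{Reg} is then $L$ equipped with componentwise $'$, which on $T_i$-components agrees with the $'$ originally fixed on $L$, confirming that the limit genuinely occurs in \textbf{Reg}.
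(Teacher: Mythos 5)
Your proof is correct and follows essentially the same route as the paper's: your stratification $R_i = T_i \sqcup \bigsqcup_{j\geq i} h_{ji}(M_j)$ is exactly the paper's dichotomy between elements lying on an infinite compatible sequence and elements with an ``ultimate ancestor'' $x_n \notin \operatorname{im}(f_n)$, and the definition of $'$ by pulling back from the limit on $T_i$ and pushing forward a pre-chosen inner inverse of the ultimate ancestor is the same in both. If anything, you are slightly more explicit than the paper about why the maximal depth exists (i.e., why $\bigcap_{j\ge i} h_{ji}(R_j)=\pi_i(L)$), which the paper leaves implicit.
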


\begin{proof}
Identify the inverse limit with
\[
R:=\biggl\{\underline{x}=(x_{i})_{i\in \Z_{>0}}\in \prod_{i\in \Z_{>0}}R_i\,:\, f_i(x_{i+1})=x_i\ \text{for all $i\in \Z_{>0}$}\biggr\}
\]
and let $\pi_i\colon R\to R_i$ be the natural projection. Let $'$ be an inner inverse operation on $R$. Since the $f_i$ are injective, either
\begin{itemize}
\item[(1)] $x_i$ occurs as the $i$th coordinate of exactly one infinite sequence $\underline{x}\in R$, or
\item[(2)] $x_i$ occurs as the $i$th coordinate of exactly one finite sequence $(x_1,\ldots x_n)$, for some $n\geq i$, with the property that $f_j(x_{j+1})=x_{j}$ for each $1\leq j\leq n-1$, and where $x_n\notin im(f_n)$.
\end{itemize}
In the latter case, we will call $x_n$ the \emph{ultimate ancestor} of $x_i$. For convenience we will let $x_i^{\ast}$ denote some fixed choice of an inner inverse of $x_i \in R_i$. We now define a new inner inverse operation $'$ on each $R_i$ in terms of $^{\ast}$ and the operation $'$ on $R$ by the rule:

\[
x_i': \ = \ \
\begin{cases}
\pi_{i}(\underline{x}')   \ \ \ \  &\text{if $x_i=\pi_i(\underline{x})$ for some $\underline{x}\in R$,}  \\
 (f_i\circ \cdots \circ f_{n-1})(x_n^{\ast}) &\mbox{if $x_n$ is the ultimate ancestor of $x_i$.}  \\
\end{cases}
\]
These are well-defined inner inverses on each $R_i$ and respected by the connecting maps.
\end{proof}
\prskip

\begin{remark}
The conjugation matrix, $C:=\left[\begin{smallmatrix}\alpha & 0\\ 0 & 1\end{smallmatrix} \right]$, used in Construction 4.3 is not a member of the ring $Q$, but of the bigger ring $B$.  It is tempting to arrange that this matrix is in $Q$, to force $\theta$ to be an inner automorphism.  Equivalently, we would want the idempotent
\[
e:=\begin{bmatrix}
1 & 0\\ 0 & 0
\end{bmatrix}=(\alpha-1)^{-1}(C-1_Q)\in Q.
\]
However, then we would lose the unit-regularity of $Q$, because the corner ring $eQe$ would not even be directly finite.  This shows the importance of having a bigger universe, $B$, in which to work. \hfill $\square$
\end{remark}

\begin{remark}
Any example of an inverse limit $R =\varprojlim R_i$ of unit-regular rings that is regular but not unit-regular has to be another example of the Bergman-type (that is, provides a regular subring $R$ of a unit-regular ring $Q$ with $R$ not unit-regular) because $R$ sits inside $\prod R_i$, which is a product of unit-regular rings, whence unit-regular. So constructions such as 4.3 are never easy.
\end{remark}
\prskip

\begin{construction}
There is an inverse limit in \emph{\textbf{Reg}}, of a sequence of regular rings with stable rank $2$, that is regular but has infinite stable rank.
\end{construction}

Let $F$ be a field with more than two elements and fix $\alpha\in F-\{0,1\}$. By a result of Menal and Moncasi in \cite[Example 3, p.\ 38]{MeMo} there is a regular $F$-algebra $R$ of stable rank $2$ that has a corner $eRe$ with infinite stable rank.  The fixed ring $T$ of conjugation of $R$ by $\alpha e + (1-e)$ is $T = eRe + (1-e)R(1-e)$, which has infinite stable rank. We can now apply the same argument used in Construction \ref{Constr:not reg} to obtain our inverse limit once we show the ring $S$ of sequences of elements of $R$ that stabilize is also of stable rank 2. Given any three elements $(a_i),(b_i),(c_i)\in S$ that are right unimodular in $S$, then for each $i\in \Z_{>0}$ we can fix elements $x_i,y_i\in R$ such that $u_i :=a_i+b_ix_i+c_iy_i\in U(R)$.  Moreover, as the $a_i,b_i,c_i$ eventually become constant, we can assume that $x_i$ and $y_i$ likewise become constant, so that $(x_i),(y_i)\in S$.  Moreover,
\[
(a_i)+(b_i)(x_i)+(c_i)(y_i)\in U(S),
\]
hence $S$ has stable rank at most 2. On the other hand,  $S$ has the ring $R$, whose stable rank is 2, as a homomorphic image, so $S$ must have stable rank 2. \hfill $\square$.

\begin{remark}
The only possibilities for the stable rank of separative exchange rings are $1$, $2$, or $\infty$, by \cite[Theorem 3.3(a)]{AGOP1}.
\end{remark}

It turns out that, in quite general settings, the injectivity hypothesis on the connecting maps in an inverse system is no limitation on the types of inverse limits that can arise.  The following proposition makes this formal.

\begin{proposition}\label{P:Inj}
Given any inverse system $(S_i,f_i)_{i\in \Z_{>0}}$ in {\bf Ring}, then there is another inverse system $(T_i,g_i)_{i\in \Z_{>0}}$ with each $g_i$ injective, and with an isomorphic inverse limit. Moreover, if $P$ is an isomorphism-invariant property of rings satisfied by each $S_i$, which passes to countable direct products, then each $T_i$ has $P$.
\end{proposition}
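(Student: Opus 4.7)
The plan is to replace the given system by the sequence of ``tails'', building enough redundancy into each $T_i$ that the connecting maps become inclusions of a direct factor. Explicitly, I would set
\[
T_i \,:=\, \prod_{j \geq i} S_j
\]
and define $g_i \colon T_{i+1} \to T_i$ by
\[
g_i\bigl((s_j)_{j \geq i+1}\bigr) \,:=\, \bigl(f_i(s_{i+1}),\, s_{i+1},\, s_{i+2},\, \ldots\bigr).
\]
Since $g_i$ is built coordinate-wise from $f_i$ and identity maps, it is a ring homomorphism; and it is injective because from $g_i((s_j))$ one recovers the original sequence by discarding the first coordinate.

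Next I would verify that $\varprojlim T_i \cong \varprojlim S_i$. A compatible sequence in $\varprojlim T_i$ is a family $(\underline{t}_i)_{i \in \Z_{>0}}$ with $\underline{t}_i = (t_{i,j})_{j \geq i} \in T_i$ satisfying $g_i(\underline{t}_{i+1}) = \underline{t}_i$; written out, this says $t_{i,j} = t_{i+1,j}$ for $j \geq i+1$ and $t_{i,i} = f_i(t_{i+1,i+1})$. Hence the diagonal entries $s_j := t_{j,j}$ agree with $t_{i,j}$ for all $i \leq j$, and satisfy $s_i = f_i(s_{i+1})$, so the assignment $(\underline{t}_i) \mapsto (s_j)$ is a ring isomorphism onto $\varprojlim S_i$. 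Going the other way, any compatible sequence $(s_j)$ in $\varprojlim S_i$ lifts to $\underline{t}_i := (s_j)_{j \geq i}$, and one checks $g_i(\underline{t}_{i+1}) = \underline{t}_i$ using $f_i(s_{i+1}) = s_i$. (Equivalently, one can simply verify that the $T_i$ together with the projections $\underline{t}_i \mapsto $ appropriate components satisfy the universal property for $\varprojlim S_i$.)

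Finally, for the moreover clause: since $P$ is isomorphism-invariant and preserved under countable direct products, and each $S_j$ has $P$, each $T_i = \prod_{j \geq i} S_j$ automatically has $P$.

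I do not anticipate any genuine obstacle; the only mild point of care is bookkeeping in the identification of the two inverse limits (making sure the re-indexing $t_{i,j} \leftrightarrow s_j$ really is bijective and respects the ring structure), and the observation that the ``head'' coordinate $f_i(s_{i+1})$ in $g_i$ is forced to equal $s_i$ on compatible families, which is exactly what makes the connecting maps $f_i$ reappear inside the new, injective system.
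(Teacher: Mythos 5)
Your proof is correct and is essentially the paper's own argument: the paper takes $T_j$ to be the subring of $\prod_{i\in\Z_{>0}} S_i$ consisting of sequences satisfying the compatibility condition $f_i(x_{i+1})=x_i$ for $i<j$, uses the inclusions $T_{j+1}\subseteq T_j$ as the injective connecting maps, and then observes $T_j\cong\prod_{i\ge j}S_i$ to get the ``moreover'' clause; your tail products with the maps $g_i$ are precisely the transport of that system along these isomorphisms. No gap --- the bookkeeping you flag (diagonal entries determining everything, and the head coordinate being forced on compatible families) is exactly what makes the two presentations match.
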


\begin{proof}
For each $j\in \Z_{>0}$, let
\[
T_j:=\{\underline{x}=(x_i)_{i\in \Z_{>0}}\in \prod_{i\in \Z_{>0}}S_i \, :\, f_{i}(x_{i+1})=x_i\ \text{for each positive integer $i<j$}\}.
\]
The inclusions $T_{j+1}\subseteq T_j$ induce a new inverse system.  Moreover, the intersection
\[
\bigcap_{i\in \Z_{>0}}T_i=\{\underline{x}=(x_i)_{i\in \Z_{>0}}\in \prod_{i\in \Z_{>0}}S_i \, :\, f_{i}(x_{i+1})=x_i\ \text{for each $i\in \Z_{>0}$}\}
\]
is an inverse limit of this new system as well as the original inverse system. Notice that $T_j\cong \prod_{i\ge j}S_i$, so the last claim quickly follows.
\end{proof}

\begin{remark}
The idea used in Proposition \ref{P:Inj} applies to more general inverse systems and appears in the proof of \cite[Theorem 2]{BergmanInjectives}.
\end{remark}
\thmskip


\section{Some positive results}\label{S:positive}
\thmskip

When the connecting maps in inverse limits are surjective, the behaviour of the limit is much better. We will make use of the fact that for an ideal $I$ of a regular ring $R$, inner inverses of an element $\bar{a} \in R/I$ lift to inner inverses of $a$. This is a well-known consequence of a lemma of Brown and McCoy \cite[Lemma 1]{BM} (see also \cite[Lemma 1.3]{VNRR}). Also if $R$ is unit-regular, units lift modulo an ideal (see \cite[Lemma 3.5]{Bac}). We need a stronger form of the latter result:
\prskip

\begin{lemma}\label{L:Lifting}
Let $R$ be a ring and $I$ an ideal of $R$. Assume that for any idempotent $e \in R$, all units of $\overline{e} \overline{R} \overline{e}$ lift to units of $eRe$. (This holds in case $R$ is unit-regular.) Let $a \in R$ be unit-regular.  Then all unit inner inverses of $\overline{a}$ lift to unit inner inverses of $a$.
\end{lemma}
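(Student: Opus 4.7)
The plan is to parametrise the set of unit inner inverses of $a$ in $R$ via units of a single corner ring, and then apply the hypothesis.

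First, I would fix a unit $u_0 \in R$ with $au_0a = a$ and set $e := au_0$, $f := u_0a$. These are idempotents satisfying $a = eaf$, $a\cdot(fu_0) = e$, and $(fu_0)\cdot a = f$; thus left multiplication by $fu_0$ is a right-module isomorphism $eR \to fR$ (with inverse left multiplication by $a$), and left multiplication by $u_0$ restricts to an isomorphism $(1-e)R \to (1-f)R$. Write any $v \in R$ in the additive ``mixed'' Peirce decomposition $v = v_{11} + v_{12} + v_{21} + v_{22}$ with $v_{11} \in fRe$, $v_{12} \in fR(1-e)$, $v_{21} \in (1-f)Re$, $v_{22} \in (1-f)R(1-e)$. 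Since $y \mapsto aya$ is a bijection $fRe \to eRf$ (with inverse $w \mapsto u_0wu_0$), the relation $ava = a$ is equivalent to $v_{11} = fu_0$.

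Next I would show that such an inner inverse $v$ is a unit of $R$ if and only if the element
\[
c \;:=\; u_0^{-1}\bigl(v_{22} - v_{21}\,a\,v_{12}\bigr) \;\in\; (1-e)R(1-e)
\]
is a unit of the corner ring. Viewing $v$ as left multiplication $R \to R$, with domain decomposed as $eR \oplus (1-e)R$ and codomain as $fR \oplus (1-f)R$, the corresponding block matrix factors as
\[
v \;=\; (1 + v_{21}a)\bigl(fu_0 + v_{12} + (v_{22} - v_{21}av_{12})\bigr),
\]
the first factor being a unit of $R$ with inverse $1 - v_{21}a$ (using $av_{21} = 0$), and the second factor being block upper triangular with diagonal entries $fu_0$ (always invertible as a map $eR \to fR$) and $S := v_{22} - v_{21}av_{12}$. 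Since isomorphisms $(1-e)R \to (1-f)R$ correspond bijectively to units of $(1-e)R(1-e)$ via pre-composition with left multiplication by $u_0$, the map $S$ is such an isomorphism exactly when $c$ is a unit of the corner ring.

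Finally, the lemma follows readily. A unit inner inverse $\bar v$ of $\bar a$ in $R/I$ has an associated element $\bar c \in (1-\bar e)\bar R(1-\bar e) \cong \overline{(1-e)R(1-e)}$ which is a unit by the previous paragraph applied in $\bar R$; by the hypothesis, $\bar c$ lifts to a unit $c \in (1-e)R(1-e)$. Pick any lifts $v_{12} \in fR(1-e)$ and $v_{21} \in (1-f)Re$ of $\bar v_{12}$ and $\bar v_{21}$, define $v_{22} := v_{21}av_{12} + u_0c$, and set $v := fu_0 + v_{12} + v_{21} + v_{22}$. By construction $v$ is a unit inner inverse of $a$ reducing to $\bar v$ modulo $I$. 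The main technical point to verify is the LU-style factorisation with ``non-square'' Peirce blocks---the row idempotent $f$ differs from the column idempotent $e$---ensuring that products such as $v_{21}a$, $av_{12}$, and $u_0c$ land in the expected Peirce components so the block manipulations are legitimate.
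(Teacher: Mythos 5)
Your argument is correct, and at its core it makes the same reduction as the paper's proof: the only genuine obstruction to lifting a unit inner inverse of $\overline{a}$ is a unit of the corner ring at the idempotent complementary to $a\cdot(\text{inner inverse})$, and that is exactly where the lifting hypothesis is invoked. The execution, however, is genuinely different. The paper first lifts $\overline{w}$ to an inner inverse $w$ of $a$ (quoting the fact that inner inverses of a regular element lift), splits $w = hwg + (1-h)w(1-g)$ relative to the idempotents $g = aw$ and $h = wa$ attached to that particular lift, and then repairs only the $(1-h)\times(1-g)$ block, using elements $s \in (1-h)R(1-g)$ and $t \in (1-g)R(1-h)$ witnessing $(1-g)R \cong (1-h)R$ (this is where unit-regularity of $a$ enters) to move the problem into the corner $(1-g)R(1-g)$. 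You instead fix a single unit inner inverse $u_0$ once and for all, and your LU-type factorisation yields a complete parametrisation of the unit inner inverses of $a$ by triples $(v_{12}, v_{21}, c)$ with $c$ a unit of $(1-e)R(1-e)$; lifting is then done coordinate by coordinate, with the hypothesis needed only for $c$. What your route buys: it bypasses the black-box ``inner inverses lift'' step (your off-diagonal Peirce components lift for free), and the explicit description of the set of unit inner inverses is of some independent interest. What the paper's route buys: brevity --- the corrected element $w_1 + sv$ requires verifying only three short identities rather than a block factorisation with mismatched row and column idempotents. Both proofs use the unit-regularity of $a$ at the same essential point, namely to produce the isomorphism between the two complementary ``kernels'' ($(1-e)R \to (1-f)R$ via $u_0$ in your version, $(1-g)R \to (1-h)R$ via $s,t$ in the paper's).
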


\begin{proof}
Let $\overline{w}$ be a unit inner inverse for $\overline{a}$.  We may as well assume $w\in R$ is a lift of $\overline{w}$ that is also an inner inverse for $a$ because inner inverses lift. Set $g:=aw$ and $h:=wa$. Let $w_1 = hwg$ and $w_2:=(1-h)w(1-g)$. Then $w = w_1 + w_2$ because the cross terms $hw(1-g)$ and $(1-h)wg$ in the Peirce decomposition relative to $h, 1-h$ and $g,1-g$ are both zero.  Since $a$ is unit-regular, we know that $(1-g)R\cong (1-h)R$.  Fix elements $s\in (1-h)R(1-g),\,  t\in (1-g)R(1-h)$ such that left multiplication by these elements are inverse isomorphisms between $(1-g)R$ and $(1-h)R$.  Now, $\overline{tw_2}\in U(\overline{(1-g)R(1-g)})$.  By hypothesis, $\overline{tw_2}$ lifts to a unit $v$ of $(1-g)R(1-g)$.

Consider $w_1+sv$.  First, it is an inner inverse for $a$, since
\[
a(w_1+sv)a=a(waw+sv(1-g))a=awawa+0=a.
\]
Second, it is a unit; its inverse is simply $a+v^{-1}t$.  Finally, $w_1+sv$ is a lift of $\overline{w}$ since
\[
\overline{w_1+sv}=\overline{w_1+stw_2}=\overline{w_1+w_2}=\overline{w}.
\]
\end{proof}

\begin{theorem}\label{T:surjective}
Given any inverse system $\varprojlim R_i$ in the variety {\bf Ring}, where each homomorphism is surjective, and where each ring is a regular ring, then the inverse limit is a regular ring.  The same is true for  unit-regular rings and for exchange rings.
\end{theorem}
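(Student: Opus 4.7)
The plan is to handle each of the three assertions by an induction that builds the required element of $R = \varprojlim R_i$ one coordinate at a time, invoking a lifting lemma at each step to guarantee compatibility with the surjections $f_n\colon R_{n+1}\to R_n$. Throughout, identify $R$ with the set of coherent sequences in $\prod R_i$, and identify $R_n$ with $R_{n+1}/\ker f_n$ via the isomorphism induced by $f_n$.

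For the regular case, given $\underline{x}=(x_i)\in R$, I would choose an inner inverse $y_1$ of $x_1$ in $R_1$ and, inductively, given $y_n\in R_n$ with $x_n y_n x_n = x_n$, view $y_n$ as an inner inverse of $\overline{x_{n+1}}$ in $R_{n+1}/\ker f_n$. The Brown--McCoy lifting result recalled at the start of Section \ref{S:positive} then produces an inner inverse $y_{n+1}\in R_{n+1}$ of $x_{n+1}$ with $f_n(y_{n+1}) = y_n$, and the sequence $(y_i)$ lies in $R$ and inner-inverts $\underline x$. The unit-regular case follows by the identical induction using Lemma~\ref{L:Lifting} in place of Brown--McCoy (its hypothesis is met since each $R_{n+1}$ is unit-regular), yielding unit inner inverse lifts $y_{n+1}$; since ring homomorphisms preserve inverses, $(y_i^{-1})$ is also coherent, making $(y_i)$ a unit in $R$.

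For the exchange case, given $\underline a = (a_i)\in R$, I would inductively build triples $(e_i,r_i,s_i)\in R_i^3$ satisfying $e_i^2 = e_i$, $e_i = a_i r_i$, $1 - e_i = (1-a_i)s_i$, together with $f_n(e_{n+1}) = e_n$, $f_n(r_{n+1}) = r_n$, and $f_n(s_{n+1}) = s_n$. Then $(e_i)\in R$ is idempotent, lies in $\underline a \cdot R$ (witnessed by the coherent sequence $(r_i)\in R$), and its complement lies in $(1-\underline a)R$ (witnessed by $(s_i)\in R$), which is exactly the exchange condition for $\underline a$. The main obstacle is the inductive lifting: an exchange triple for $\overline{a_{n+1}}$ in $R_{n+1}/\ker f_n$ must be lifted to one for $a_{n+1}$ in $R_{n+1}$ with all three coordinates matching --- simply lifting the idempotent is insufficient, because we need the lifted idempotent to land in $a_{n+1}R_{n+1}$ and we need a coherent $(r_i)$ and $(s_i)$ in $R$. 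The route I would take is to first lift $\bar r$ to some $\tilde r\in R_{n+1}$, set $e_0 := a_{n+1}\tilde r$ (so $\bar{e_0} = \bar e$ and $e_0^2 - e_0 \in \ker f_n$), then apply Nicholson's idempotent-lifting theorem in the exchange ring $R_{n+1}$ to replace $e_0$ by an idempotent $e\in e_0 R_{n+1}\subseteq a_{n+1}R_{n+1}$ lifting $\bar e$, and finally adjust a lift of $\bar s$ by an element of $\ker f_n$ to obtain $s\in R_{n+1}$ satisfying $(1 - a_{n+1})s = 1 - e$, applying the exchange property of $R_{n+1}$ one more time to produce the complementary witness.
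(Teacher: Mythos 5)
Your handling of the first two assertions is correct and is in substance the paper's own argument: the paper fixes an inner-inverse (resp.\ unit-inner-inverse) operation on $R_1$ and uses the Brown--McCoy lifting (resp.\ Lemma~\ref{L:Lifting}) to transport it recursively to $R_2,R_3,\ldots$ so that the connecting maps respect the operations, whereupon the limit lives in \textbf{Reg} (resp.\ \textbf{UnitReg}); your coordinate-by-coordinate recursion is an equivalent packaging of the same lifting.

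The exchange case is where the two proofs part company --- the paper simply cites \cite[Theorem 1.4]{PP} --- and your direct attempt has a genuine gap. You correctly identify that the whole triple $(e,r,s)$ must be lifted coherently, but the proposed inductive step does not close. First, Nicholson's lifting produces an idempotent $e=e_0c\in a_{n+1}R_{n+1}$ with $f_n(e)=e_n$, but the induced witness $r_{n+1}:=\tilde r c$ satisfies $f_n(r_{n+1})=r_nf_n(c)$, which need not equal $r_n$; this particular defect is repairable by normalising $r_i=r_ie_i$ throughout (then $r_nf_n(c)=r_n\bigl(e_nf_n(c)\bigr)=r_ne_n=r_n$, since $e_n=e_nf_n(c)$), but you do not do this. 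Second, and more seriously, an idempotent $e\in a_{n+1}R_{n+1}$ lifting $e_n$ need not satisfy $1-e\in(1-a_{n+1})R_{n+1}$ at all --- that is the other half of the exchange condition and is not conferred by Nicholson's theorem --- and your final step of ``adjusting a lift of $\bar s$ by an element of $\ker f_n$'' requires $(1-e)-(1-a_{n+1})\tilde s$ to lie in $(1-a_{n+1})\ker f_n$ rather than merely in $\ker f_n$, which you have not established; invoking ``the exchange property of $R_{n+1}$ one more time'' produces a complemented idempotent for $a_{n+1}$, not a complementary witness for your specific $e$. What you are attempting here is precisely the simultaneous triple-lifting lemma underlying \cite[Theorem 1.4]{PP}, and it needs a genuinely more careful argument (for instance via the reformulation $e=a+ra(1-a)$, which makes both witnesses polynomial in $a$ and $r$); either supply that argument or, as the paper does, cite the result.
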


\begin{proof}
Fix any inner inverse operation, or unit inner inverse operation, on $R_1$.  Applying Lemma \ref{L:Lifting} we can recursively find similar operations on $R_2,R_3,\ldots$ so that the connecting homomorphisms respect these operations. The inverse limits are now taking place in the varieties \textbf{Reg} and \textbf{UnitReg} respectively, whence that is also where the limit must reside. The result for exchange rings was obtained by Pedersen and Perera \cite[Theorem 1.4]{PP}.
\end{proof}
\prskip

\begin{question}
Does  Theorem \ref{T:surjective} also hold for separative regular (or separative exchange) rings? Here is a partial answer, which we extend to exchange rings later (Theorem \ref{T:weak lifting exchange}).
\end{question}
\thmskip

\begin{theorem}\label{T:weak lifting}
Let $(R_i,f_i)_{i \in \mathbb{Z}_{>0}}$ be an inverse system in the variety \emph{\textbf{Ring}}, where each $f_i$ is surjective, and where each $R_i$ is a separative regular ring.  Assume that for any idempotent $e$ in any $R_{i+1}$, all units of $f_i(e) R_i f_i(e)$ lift (along $f_i$) to units of $e R_{i+1} e$.  Then the inverse limit is a separative regular ring.
\end{theorem}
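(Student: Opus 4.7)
By Theorem \ref{T:surjective}, the inverse limit $R := \varprojlim R_i$ is a regular ring, so what remains is to establish separativity. The plan is to mimic the proof of Theorem \ref{T:surjective}, but within the subvariety $\textbf{SepReg}$ of $\textbf{Reg}$ identified in \cite[Theorem 4.6]{AGNPP}. Recall that this realises the class of separative regular rings as those ring-with-inner-inverse structures $(R, +, -, \cdot, 0, 1, {}')$ satisfying a particular identity built from four layered applications of $'$. The concrete plan is thus to equip each $R_i$ with an inner inverse operation $'_i$ such that (a) $R_i$ lies in $\textbf{SepReg}$ under $'_i$, and (b) $f_i$ respects these operations, i.e.\ $f_i(a'_{i+1}) = f_i(a)'_i$ for every $a \in R_{i+1}$. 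Since $\textbf{SepReg}$ is a variety, inverse limits taken inside it remain in $\textbf{SepReg}$, so once such a compatible system is built, the limit $R$ will automatically be separative.

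The operations $'_i$ are built by induction on $i$, paralleling the proof of Theorem \ref{T:surjective}. Start with any $'_1$ on $R_1$ witnessing $R_1 \in \textbf{SepReg}$, which exists because $R_1$ is separative regular. Given $'_i$, the inductive step is to produce $'_{i+1}$ on $R_{i+1}$ that both witnesses $R_{i+1} \in \textbf{SepReg}$ and satisfies $f_i(a'_{i+1}) = f_i(a)'_i$. This is precisely the role played by Lemma \ref{L:Lifting} in the unit-regular case of Theorem \ref{T:surjective}, and it is where the units-lifting hypothesis of the present theorem enters. The expectation is that the layered $\textbf{SepReg}$ identity decomposes into pieces, each of which specifies a \emph{unit} inner inverse of an element residing in some corner $eR_{i+1}e$ (using the idempotent parametrisation $e = vv' + v(1 - vv')$ from the discussion of $\textbf{SepReg}$); applying Lemma \ref{L:Lifting} together with the present hypothesis at each layer, one lifts these unit inner inverses from $f_i(e) R_i f_i(e)$ to $e R_{i+1} e$ and assembles them into a compatible operation $'_{i+1}$.

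The main obstacle will be the rigidity of the lifting, namely that after choosing lifted unit inner inverses for each of the four layered applications of $'$, the resulting operation $'_{i+1}$ still satisfies the overall $\textbf{SepReg}$ identity, and not merely the defining relations of inner (or unit) inverses on individual elements. Should this nested verification prove too delicate, an alternative route is to argue directly with the 3+3 elementary-matrix diagonalisation characterisation of separativity: for each idempotent $g = (g_i) \in R$ and each $2 \times 2$ matrix $A = (A_i)$ over $gRg$, build diagonalising triples of elementary matrices $(P_i, Q_i)$ for the projected matrices $A_i$ compatibly, by lifting $(P_i, Q_i)$ from $R_i$ to $R_{i+1}$ at each step. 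A crude lift $(\tilde P, \tilde Q)$ yields a matrix $\tilde P A_{i+1} \tilde Q$ that is diagonal modulo $\ker f_i$, and the units-lifting hypothesis is then the mechanism through which the residual $\ker f_i$-valued off-diagonal entries can be absorbed by perturbing the entries of $\tilde P, \tilde Q$ to produce the desired $(P_{i+1}, Q_{i+1})$. Either route places the units-lifting hypothesis exactly where it must go: as the ingredient allowing separativity data, and not merely regularity data, to be lifted coherently through the surjections $f_i$.
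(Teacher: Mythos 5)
Your proposal correctly disposes of regularity via Theorem \ref{T:surjective} and correctly locates where the units-lifting hypothesis must enter, but neither of your two routes is actually carried out, and what you leave open is precisely the hard part. On the first route: Lemma \ref{L:Lifting} lifts a unit inner inverse of a \emph{single} element; to upgrade this to a globally defined operation $'_{i+1}$ satisfying the layered \textbf{SepReg} identity of \cite{AGNPP}, you would need to make all the nested choices coherently and simultaneously for every element, and since each choice of $a'$ changes which elements the next layer of the identity is applied to, the problem is not a collection of independent single-element liftings. You name this obstacle yourself but do not resolve it. The second route has the same status: the assertion that the residual $\ker f_i$-valued off-diagonal entries ``can be absorbed by perturbing the entries'' of the lifted elementary matrices is exactly the statement that would need proof, and no mechanism for it is supplied.

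The paper sidesteps all of this with a different reduction. By the criterion of \cite[Proposition 6.2]{AGOP2}, a regular ring is separative if and only if every element $a$ with $1-a \in Rr(a) \cap \ell(a)R$ is unit-regular. Writing $1-a = \sum_j s_j x_j = \sum_k y_k t_k$ with $ax_j = y_k a = 0$, these equations project along the $\pi_i$, so each component $a_i$ satisfies the same condition in $R_i$ and is therefore unit-regular by separativity of $R_i$. One then starts with a unit inner inverse of $a_1$ and applies Lemma \ref{L:Lifting} recursively --- its hypothesis is exactly your corner-units-lifting assumption --- to build a compatible string of unit inner inverses, i.e.\ a unit inner inverse of $a$ in the limit. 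This converts the task into lifting data attached to one element at a time, which Lemma \ref{L:Lifting} can genuinely do, rather than lifting an entire operation subject to a universally quantified nested identity. As written, your proposal is a plan whose central verification is missing; I would rework it along the element-wise criterion above.
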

\pfskip

\begin{proof}  We will use the separativity criterion of [2, Proposition 6.2]: A regular ring $R$ is separative if and only if each $a \in R$ satisfying
\begin{equation}\tag{*}
 R r(a) = \ell(a) R = R(1-a)R
\end{equation}
is unit-regular in $R$. (Here $r(a)$ and $l(a)$ are the right and left annihilator ideals of the element $a$.)
Since $R r(a)$ and $\ell(a) R$ are always contained in $R(1-a)R$, the above condition is equivalent to
\begin{equation}\tag{**}
  1-a \in R r(a) \cap \ell(a) R.
\end{equation}
Thus, let $a$ be an element of the regular ring $R := \varprojlim R_i$ satisfying $(**)$.  Then
$$
1-a = \sum_{j=1}^m s_j x_j = \sum_{k=1}^n y_k t_k
$$
for some $s_j,x_j,y_k,t_k \in R$ such that $ax_j = y_ka = 0$ for all $j$, $k$.  These equations project to corresponding equations in each $R_i$, and so each component $a_i$ of $a$ satisfies the $(**)$ condition in $R_i$.  Consequently, each $a_i$ is unit-regular in $R_i$.
\prskip

Starting with a unit inner inverse for $a_1$ and applying Lemma 5.1 recursively, we obtain components for a unit inner inverse of $a$ in $R$. Therefore $R$ is separative.
\end{proof}
\thmskip

\begin{corollary}\label{f.d.}
\textbf{\emph{(to Theorem \ref{T:surjective})}}\ Let $F$ be a field.  If $(R_i,f_i)_{i\in \Z_{>0}}$ is an inverse system of finite-dimensional regular $F$-algebras (resp.\ exchange $F$-algebras), and the connecting maps are $F$-algebra homomorphisms, then the inverse limit is a unit-regular ring (resp.\ an exchange ring.)
\end{corollary}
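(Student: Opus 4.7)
The plan is to reduce to the surjective case handled by Theorem \ref{T:surjective} by exploiting finite-dimensionality. For each fixed $i$, I would look at the descending chain of $F$-subspaces
\[
R_i \ \supseteq \ f_i(R_{i+1}) \ \supseteq \ (f_i\circ f_{i+1})(R_{i+2}) \ \supseteq \ \cdots
\]
inside the finite-dimensional space $R_i$. This chain must stabilize at some $F$-subalgebra $R_i'\subseteq R_i$ (it contains $1$ since the $f_j$ are unital). I would then check that the restriction $g_i\colon R_{i+1}'\to R_i'$ of $f_i$ is surjective, which follows by choosing an index $\ell$ large enough that both $R_i'=(f_i\circ\cdots\circ f_{\ell-1})(R_\ell)$ and $R_{i+1}'=(f_{i+1}\circ\cdots\circ f_{\ell-1})(R_\ell)$ hold, and then factoring.

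Next I would verify that $\varprojlim R_i\cong \varprojlim R_i'$. A coherent sequence $(x_i)\in \varprojlim R_i$ satisfies $x_i=(f_i\circ\cdots\circ f_{j-1})(x_j)$ for every $j>i$, so $x_i$ lies in every term of the descending chain and hence in $R_i'$; conversely, any coherent sequence with $x_i\in R_i'$ is already an element of $\varprojlim R_i$. Thus the two inverse systems have the same limit.

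I would then observe that each $R_i'$ inherits the relevant property: since $R_i'=(f_i\circ\cdots\circ f_{j-1})(R_j)$ for sufficiently large $j$, it is a quotient of the $F$-algebra $R_j$ by the kernel of that composition. Regularity and the exchange property are both preserved under homomorphic images, so each $R_i'$ is a finite-dimensional regular (resp.\ exchange) $F$-algebra.

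Finally, in the regular case each $R_i'$ is Artinian and regular, hence semisimple Artinian, hence a finite product of matrix rings over division $F$-algebras, which is unit-regular. Applying Theorem \ref{T:surjective} to the surjective inverse system $(R_i',g_i)$ in \textbf{UnitReg} yields that $\varprojlim R_i'$ is unit-regular; in the exchange case the same theorem (last sentence) gives that $\varprojlim R_i'$ is an exchange ring. The main obstacle is really only the bookkeeping of confirming surjectivity and invariance of the limit once the chain stabilizes; once that is done, everything reduces to Theorem \ref{T:surjective} together with the standard fact that finite-dimensional regular algebras are semisimple.
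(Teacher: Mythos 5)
Your argument is correct and is essentially the paper's own proof: the paper likewise defines $S_i:=\bigcap_{j>i}f_if_{i+1}\cdots f_{j-1}(R_j)$, notes that finite-dimensionality forces the descending chain of images to stabilize so that the restricted connecting maps become surjective with unchanged inverse limit, and then invokes Theorem \ref{T:surjective} together with the fact that finite-dimensional regular algebras are semisimple, hence unit-regular. No gaps.
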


\begin{proof}
Notice that f.d.\ regular algebras are unit-regular because they are semisimple.
For each integer $i\geq 1$, define
\[
S_i:=\bigcap_{j>i}f_if_{i+1}\cdots f_{j-1}(R_j).
\]
For any $j>i$, the image of $R_j$ in $R_i$ is an $F$-subalgebra, of possibly smaller dimension.  Since the dimension cannot decrease infinitely often, then $S_i$ is the image of $R_j$ in $R_i$ for any sufficiently large integer $j>i$.  Hence $S_i$ is a finite dimensional $F$-algebra of the same type.
The connecting maps $f_i$ restrict to connecting maps between the $S_i$.  The inverse limit is unchanged, but now the restricted connecting maps are surjective. Hence our corollary follows from Theorem \ref{T:surjective}.
\end{proof}
\prskip

The trick used in the proof of the previous corollary had been observed by Grothendieck in the 1960s (see \cite{Groth}). Later, others such as Pedersen and Perera in \cite{PP} observed one can quickly reduce to the case where the connecting maps are surjective by replacing the $R_i$ by the image $S_i$ of $\pi_i$, where $\pi_i: \varprojlim R_i \rightarrow R_i$ is the projection map. Let's formally record the result.
\thmskip

\begin{proposition} Let $R = \varprojlim R_i$ be an inverse limit in \emph{\textbf{Ring}} (or in any variety), with projection maps $\pi_i : R \rightarrow R_i$. Then $R$ can also be viewed as an inverse limit of rings (algebras) $S_i = \pi_i(R)$ with surjective connecting maps restricted from the original connecting maps $f_i : R_{i+1} \rightarrow R_i$. However, the modification of $R_i$ to $S_i$ can sometimes alter properties, such as going from unit-regular to non-unit-regular (as must happen in Construction \ref{Constr:not unit regular} by Theorem \ref{T:surjective}). In fact, the passage from $R_i$ to $S_i$ doesn't particularly respect properties that don't already pass to non-surjective inverse limits.
\end{proposition}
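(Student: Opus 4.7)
The plan is to establish the proposition in two short steps: first realize $R$ as the inverse limit of the (smaller) system $(S_i)$, then make brief remarks comparing properties. The main content is purely formal.

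First I would define $S_i := \pi_i(R) \subseteq R_i$. Since $\pi_i$ is a morphism in the ambient variety (ring homomorphism, or homomorphism of $F$-algebras, etc.), $S_i$ is automatically a subalgebra of $R_i$ in that variety. The compatibility $f_i \circ \pi_{i+1} = \pi_i$ (definition of the inverse limit cone) gives
\[
f_i(S_{i+1}) = f_i(\pi_{i+1}(R)) = \pi_i(R) = S_i,
\]
so the restriction $g_i := f_i|_{S_{i+1}} : S_{i+1} \to S_i$ is a surjective morphism in the variety. Thus $(S_i, g_i)_{i \in \Z_{>0}}$ is a genuine inverse system with surjective connecting maps.

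Next I would show $R \cong \varprojlim S_i$. Using the concrete realization from Section~\ref{Section:InverseLimits},
\[
\varprojlim S_i = \Bigl\{(s_i) \in \prod_{i} S_i : g_i(s_{i+1}) = s_i \text{ for all } i\Bigr\} \subseteq \varprojlim R_i = R.
\]
Conversely, any $\underline{x} = (x_i) \in R$ satisfies $x_i = \pi_i(\underline{x}) \in \pi_i(R) = S_i$, so $\underline{x}$ lies in $\prod S_i$; and the condition $f_i(x_{i+1}) = x_i$ already holds in $R$, so $\underline{x} \in \varprojlim S_i$. The two sets coincide, and the projections agree on the nose, so $R \cong \varprojlim S_i$ both as the set-theoretic limit and (by the universal property it inherits) as the categorical limit in the given variety.

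The remarks about properties are commentary rather than formal claims, but I would justify them by pointing back to earlier results. For instance, Construction~\ref{Constr:not unit regular} produces a non-unit-regular $R$ as an inverse limit of unit-regular $R_i$ in {\bf Reg}; if the associated $S_i$ were still unit-regular, Theorem~\ref{T:surjective} applied to the surjective system $(S_i,g_i)$ would force $R$ to be unit-regular, a contradiction. Hence the passage $R_i \leadsto S_i$ must destroy unit-regularity, confirming that properties not already preserved under general (non-surjective) inverse limits need not survive this reduction. The only mild subtlety throughout is purely categorical bookkeeping—ensuring $S_i$ is a subobject in the variety and that the restricted $g_i$ remains a morphism—both of which are immediate from $\pi_i$ and $f_i$ being morphisms. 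There is no real obstacle.
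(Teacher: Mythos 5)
Your proposal is correct and follows the same route as the paper, whose proof simply notes that $f_i$ induces a surjection $S_{i+1}\to S_i$ and that the new system has the same inverse limit; you have merely spelled out the two containments and the identity $f_i(\pi_{i+1}(R))=\pi_i(R)$ that the paper leaves implicit. Your justification of the "degradation" remark via Construction 4.3 and Theorem 5.2 matches the parenthetical reasoning in the statement itself.
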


\begin{proof} It is clear that $f_i$ induces a surjection of $S_{i+1}$ onto $S_i$, and the inverse limit from this new system agrees with the original.
\end{proof}
\prskip

\begin{remark} \label{R:modify}
 Proposition 5.6 is somewhat dual to Proposition \ref{P:Inj} for assuming the connecting maps are injective, except there the properties of the modified $R_i$ in $\varprojlim R_i$ remain largely the same as the original (e.g.\ regularity, unit-regularity, separativity). \hfill $\square$
\end{remark}
\prskip

In the situation of Corollary \ref{f.d.}, the inverse limit turns out to be a direct product of f.d.\ regular $F$-algebras, as follows from our next result. Let us say that a ring homomorphism $g : A \rightarrow B$ is an \emph{ideal-split ring epimorphism} if $g$ is surjective and $\ker g$ is an ideal direct summand of $A$, hence also a nonunital ring direct summand.  There is then a nonunital ring homomorphism $h : B \rightarrow A$ such that $gh = 1_B$, so that $g$ splits in the category \textbf{Rng} of nonunital rings. Corresponding statements hold for $F$-algebra homomorphisms.
\prskip

\begin{proposition} \label{split}
Let $R$ be an inverse limit of a system $(R_i,f_i)_{i > 0}$ where the $f_i$ are ideal-split ring (resp., $F$-algebra) epimorphisms.  Then
$$
R \cong R_1 \times \prod_{i\ge1} \ker f_i
$$
as rings (resp., $F$-algebras).
\end{proposition}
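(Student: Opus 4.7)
The plan is to iterate the ideal-split decomposition of a single connecting map to get an explicit formula for each finite stage $R_n$ as a direct product of $R_1$ with the first $n-1$ kernels, and then pass that description to the inverse limit. For the \emph{one-step splitting}, write $K_i := \ker f_i$ and let $I_i$ be a complementary (two-sided) ideal of $R_{i+1}$ furnished by the ideal-split hypothesis, so $R_{i+1} = I_i \oplus K_i$ as an internal direct sum of ideals in a unital ring. This forces $1_{R_{i+1}} = e_i + \varepsilon_i$ for complementary central idempotents $e_i \in I_i$ and $\varepsilon_i \in K_i$, which serve as the identities of $I_i$ and $K_i$ respectively, so $R_{i+1} \cong I_i \times K_i$ as unital rings. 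Since $\varepsilon_i \in \ker f_i$ we have $f_i(e_i) = 1_{R_i}$, and hence $f_i|_{I_i}\colon I_i \to R_i$ is a unital ring homomorphism which is surjective (because $f_i$ is and $f_i(K_i) = 0$) and injective (because $I_i \cap K_i = 0$). Thus $R_{i+1} \cong R_i \times K_i$ via $x \mapsto (f_i(x),\, x\varepsilon_i)$; when the starting data is over $F$, everything here is automatically $F$-linear because $e_i, \varepsilon_i$ lie in the $F$-algebra $R_{i+1}$.

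Next, \emph{iterate} to obtain ring (resp.\ $F$-algebra) isomorphisms
\[
\psi_n\colon R_n \xrightarrow{\ \sim\ } R_1 \times K_1 \times \cdots \times K_{n-1} \qquad (n \geq 1),
\]
defined recursively by $\psi_1 = \mathrm{id}_{R_1}$ and $\psi_n(x) = \bigl(\psi_{n-1}(f_{n-1}(x)),\, x\varepsilon_{n-1}\bigr)$ for $n \geq 2$. A direct check gives the compatibility $\psi_{n-1} \circ f_{n-1} = \pi_n \circ \psi_n$, where $\pi_n\colon R_1 \times K_1 \times \cdots \times K_{n-1} \to R_1 \times K_1 \times \cdots \times K_{n-2}$ drops the last coordinate; thus the $\psi_n$ assemble into an isomorphism of inverse systems from $(R_n, f_n)$ onto $(R_1 \times K_1 \times \cdots \times K_{n-1},\ \pi_n)$.

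Finally, a coherent sequence in the target system is freely determined by its $R_1$-component together with, for each $i \geq 1$, its final $K_i$-coordinate (which first appears at stage $i+1$ and is copied unchanged to all later stages). Therefore the inverse limit of the target system is tautologically $R_1 \times \prod_{i \geq 1} K_i$, and transporting back along $\varprojlim \psi_n$ yields the desired isomorphism $R \cong R_1 \times \prod_{i \geq 1} K_i$. I do not anticipate a substantive obstacle beyond the bookkeeping in the iteration: the entire content is that each ideal-split step peels off a central-idempotent direct factor and leaves the earlier factors untouched, so the inverse limit collapses to an ordinary direct product.
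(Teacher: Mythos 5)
Your proof is correct and takes essentially the same route as the paper's: both iterate the ideal-split decomposition to peel off $\ker f_i$ as a central-idempotent direct factor at each stage, leaving a complement that maps isomorphically onto the previous ring. The only difference is organizational --- you package the iteration as an isomorphism of inverse systems onto a system of finite products with coordinate-dropping connecting maps, whereas the paper builds the internal ideal decompositions $R_{i+1} = \bigoplus_{k=1}^{i+1} R_{i+1,k}$ and verifies directly that the induced map onto $\prod_k \varprojlim_i R_{ik}$ is bijective; your version is arguably the cleaner bookkeeping.
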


\begin{proof}
We give the proof for inverse limits of rings. The proof for inverse limits of $F$-algebras is identical. We assume with no loss of generality that
\[
R = \{(a_i)_{i\in \Z_{> 0}}\in \prod R_i\, :\, f_i(a_{i+1})=a_i \text{ for each }i\in \Z_{> 0}\},
\]
and for $i \in \Z_{> 0}$ we let $\pi_i$ denote the projection $R \rightarrow R_i$.
For $i \ge 1$, set $K_{i+1} := \ker f_i \vartriangleleft R_{i+1}$.  By assumption, there is an ideal $L_{i+1} \vartriangleleft R_{i+1}$ such that $R_{i+1} = K_{i+1} \oplus L_{i+1}$.  Thus $L_{i+1}$ is a unital ring and $f_i$ maps $L_{i+1}$ isomorphically onto $R_i$. Now set $R_{11} := R_1$ and write $R_2 = R_{21} \oplus R_{22}$ with $R_{21} := L_2$ and $R_{22} := K_2$, so that $f_1$ maps $R_{21}$ isomorphically onto $R_{11}$.  There is an ideal decomposition $L_3 = R_{31} \oplus R_{32}$ such that $f_2$ maps $R_{31}$ and $R_{32}$ isomorphically onto $R_{21}$ and $R_{22}$, respectively.  Then $R_3 = R_{31} \oplus R_{32} \oplus R_{33}$ with $R_{33} := K_3$.
\prskip

Continuing recursively, we obtain ideal decompositions $R_{i+1} = \bigoplus_{k=1}^{i+1} R_{i+1,k}$ such that $R_{i+1,i+1} = K_{i+1}$ and for $k \in [1,i]$, the map $f_i$ restricts to an isomorphism $f_{ik} : R_{i+1,k} \rightarrow R_{ik}$.  Let $p_{i+1,k}$ denote the projection $R_{i+1} \rightarrow R_{i+1,k}$ relative to the above decomposition.  We include the case $i=0$ in this notation, so that $p_{11}$ is the identity map $R_1 \rightarrow R_{11}$.  Then $p_{i+1,k} f_{i+1} = f_{i+1,k} p_{i+2,k}$ for all $k \le i+1$. For $k \ge 1$, the map $p_k := (p_{kk} \pi_k, p_{k+1,k} \pi_{k+1}, \dots)$ projects $\prod_{i\ge1} R_i$ onto $\prod_{i\ge k} R_{ik}$.  The family $(p_k)_{k\ge1}$ induces a homomorphism $p : \prod_{i\ge 1} R_i \rightarrow \prod_{k\ge1} \bigl( \prod_{i\ge k} R_{ik} \bigr)$.  If $x\in \prod_{i\ge1} R_i$, then
\begin{itemize}
\item Each $x_i = x_{i1} + \cdots + x_{ii}$ for some $x_{ik} \in R_{ik}$.
\item $p_{ik}(x_i) = x_{ik}$ for $i \ge k \ge 1$.
\item $p_k(x) = (x_{kk}, x_{k+1,k}, \dots)$ for $k \ge1$.
\end{itemize}
Hence, in case $x \in \ker p = \bigcap_{k\ge1} \ker p_k$, we have $x_{ik} = 0$ for all $i \ge k \ge 1$, and consequently $x = 0$.  Therefore $p$ is injective.
For $k \ge 1$, let $S_k \subseteq \prod_{i\ge k} R_{ik}$ denote the inverse limit of the system $(R_{ik},f_{ik})_{i\ge k}$.  Since $f_{ik} : R_{i+1,k} \rightarrow R_{ik}$ is an isomorphism for all $i \ge k$, we have $S_k \cong R_{kk}$, which equals $R_1$ when $k=1$ and $K_k$ when $k>1$.  From the fact that $p_{ik} f_i = f_{ik} p_{i+1,k}$ for all $i \ge k$, we obtain $p_k(R) \subseteq S_k$, and thus $p(R) \subseteq S := \prod_{k\ge1} S_k$.
\prskip

We claim that $p(R) = S$.  Thus let $s = (s_k)_{k\ge1} \in S$, and write $s_k = (s_{ik})_{i\ge k} \in S_k$ for $k \ge 1$.  Then $f_{ik}(s_{i+1,k}) = s_{ik}$ for all $i \ge k \ge 1$.  Set $x_i := s_{i1} + \cdots + s_{ii} \in R_{i1} + \cdots + R_{ii} = R_i$ for all $i \ge 1$, and observe that
$$
f_i(s_{i+1,1} + \cdots + s_{i+1,i}) = f_{i1}(s_{i+1,1}) + \cdots + f_{ii}(s_{i+1,i}) = s_{i1} + \cdots + s_{ii} = x_i \,.
$$
Since $x_{i+1} = s_{i+1,1} + \cdots + s_{i+1,i} + s_{i+1,i+1}$ with $s_{i+1,i+1} \in R_{i+1,i+1} = \ker f_i$, it follows that $f_i(x_{i+1}) = x_i$.  Thus $x := (x_i)_{i\ge1}$ lies in $R$.  Now
$$
p_k(x) = (x_{kk}, x_{k+1,k}, \dots) = (s_{kk}, s_{k+1,k}, \dots) = s_k \qquad \forall\; k \ge1,
$$
whence $p(x) = (s_1,s_2,\dots) = s$.  Thus $p(R) = S$, as claimed.
\prskip

Therefore $p$ restricts to an isomorphism of $R$ onto $S$.  Since
$$
S = \prod_{k\ge1} S_k \cong \prod_{k\ge1} R_{kk} = R_1 \times \prod_{k\ge1} K_{k+1} = R_1 \times \prod_{k\ge1} \ker f_k = R_1 \times \prod_{i\ge1} \ker f_i \,,
$$
the proposition is proved.
\end{proof}
\prskip

In particular, if the $R_i$ are semisimple rings, then for the $f_i$ to be ideal-split ring epimorphisms they just need to be surjective.  The proposition shows that if the $R_i$ are semisimple, then $R$ is isomorphic to a direct product of semisimple rings. In particular, the limit is unit-regular, yielding another proof of Corollary \ref{f.d.}.
\prskip

\begin{remark}\label{R:C*}
Our results on algebraic inverse limits of regular (or exchange) rings do not appear to impact their operator algebra cousins, say $C^*$-algebras. One reason is an old result of Kaplansky (see \cite{Kap}) saying that Banach algebras that are regular have to be finite-dimensional. Therefore, in general, (algebraic) inverse limits $\varprojlim S_i$ of $C^*$-algebras need not be  $C^*$-algebras (even with surjective connecting maps). As a simple example, for $i = 1,2, \ldots$, let $R_i = M_2(\mathbb{C})$ with standard involution and norm, and set $S_i = R_1 \times R_2 \times \cdots \times R_i$. Then with the natural projections $f_i: S_{i+1}   \rightarrow S_i$ as connecting maps,
\[
                         \varprojlim S_i \ \cong \ \prod _{i = 1}^{\infty}\, R_i
\]
is an infinite-dimensional, regular algebra, whence not a Banach algebra let alone a $C^*$-algebra. However, Brown and Pedersen (see \cite{BP}) have shown that an inverse limit in the analytic sense (so strings $(a_1,a_2, \ldots )$ in the inverse limit are bounded) of $C^*$-algebras of real rank 0, and with surjective connecting maps, is again a $C^*$-algebra of real rank 0. This fits neatly with algebraic inverse limits of exchange rings because $C^*$-algebras of real rank 0 are exactly the $C^*$-algebras which are exchange rings by \cite[Theorem 7.2]{AGOP1}. \hfill $\square$
\end{remark}
\thmskip


\section{Inverse limits of associated monoids}
\thmskip

Much of the work in regular rings $R$ since the 1990s has been done by applying monoid techniques to the commutative monoid $V(R)$ of isomorphism classes $[A]$ of f.g.\ projective right $R$-modules $A$, where addition is defined by $[A]+[B] = [A \oplus B]$. Alternatively, we can view $V(R)$ as the monoid of isomorphism classes $[e]$ of idempotents $e$ from $\bigcup_{n=1}^\infty M_n(R)$, where $[e]$, for an idempotent $e \in M_n(R)$, corresponds to the isomorphism class $[eR^n]$. So a natural question is how  our study of inverse limits of regular rings relates to inverse limits of these associated monoids. This is particularly relevant to the Separativity Problem because the inverse limit of separative monoids is always separative, being a submonoid of a product of separative monoids. Thus, given an inverse limit $R = \varprojlim R_i$ of separative regular rings $R_i$, if we know $V(R) \cong \varprojlim V(R_i)$, we can immediately conclude that $R$ is separative. On the other hand, if $V(R$) does not match the limit of the $V(R_i)$, there is some hope that $R$ may not be separative.
\prskip

We recall two properties of monoids.  Given a commutative monoid $M$, written additively, then it is \emph{conical} when for any $a,b\in M$,
\[
a+b=0 \Longrightarrow a=b=0.
\]
Also, an element $u\in M$ is called an \emph{order-unit} if for each $a\in M$, there exists some nonnegative integer $n\in \Z_{\geq 0}$ such that $a\leq nu$, which means
\[
a+b=nu\, \text{ for some $b\in M$}.
\]
Notice that $V(R)$ is conical, and the isomorphism class of the right regular module $R_R$ is an order-unit.
Conversely, any conical commutative monoid with an order-unit is isomorphic to $V(R)$ for some ring $R$, with the order-unit mapping to $[R_R]$ (and the ring can be forced to satisfy extra properties); this deep result is due to the work of Bergman and Dicks; see the paragraph following Theorem 3.4 in \cite{BergmanDicks}. \prskip

We can view $V(R)$ as a functorial construction.  Indeed, given any ring homomorphism $\varphi\colon R\to S$, there is a corresponding ring homomorphism $M_n(\varphi)\colon M_n(R)\to M_n(S)$, for each integer $n\geq 1$.  Thus, we can define $V(\varphi)\colon V(R)\to V(S)$ by the rule $[e]\mapsto [M_n(\varphi)(e)]$ for any idempotent $e \in M_n(R)$.
\prskip

Let $\Monu$ denote the category whose objects are pairs $(M,u)$, where $M$ is a commutative monoid and $u$ is an order-unit in $M$.  The morphisms are monoid homomorphisms that respect the distinguished order-units. This category is not a variety by Birkhoff's theorem (see \cite[Theorem 2.15]{BasicAlgebra}), since the class of objects is not closed under infinite direct products.  Thus, we need to provide an alternative argument for why inverse limits exist in $\Monu$.
\prskip

Suppose that for each $i\in \Z_{>0}$ we are given a pair $(M_i,u_i)$, as well as connecting morphisms $\varphi_i\colon (M_{i+1},u_{i+1})\to (M_i,u_i)$ in $\Monu$.  The Cartesian product $\prod_{i\in \Z_{>0}}M_i$ is a monoid containing the element $u=(u_i)_{i\in \Z_{>0}}$; however, $u$ may not be an order-unit in the product.  Let $M$ be the collection of elements $a$ in the product such that $a\leq nu$ for some $n\in \Z_{>0}$.  Then $M$ is a monoid with $u$ as an order-unit; it is the product object of the family $((M_i,u_i))_{i\in \Z_{>0}}$ in $\Monu$.
Next, let $I$ be the inverse limit object of the inverse system $((M_i,\varphi_i))$, taken in the variety of monoids.  In other words,
\[
I=\biggl\{(a_i)_{i\in \Z_{>0}}\in \prod_{i\in \Z_{>0}}M_i\, :\, \varphi_i(a_{i+1})=a_i \text{ for each $i\in \Z_{>0}$}\biggr\}.
\]
Now, fix $N := I\cap M$, which is a submonoid of $M$ containing $u$.  We claim that $u$ is an order-unit in $N$.  Given $a \in N$, we have $a = (a_i)$ with $\varphi_i(a_{i+1})=a_i$ for all $i$, and $a+b = mu$ for some $b \in \prod_{i\in \Z_{>0}} M_i$ and $m \in \Z_{>0}$.  For each $i$, we have
\[
a_i + b_i = m u_i = \varphi_i(m u_{i+1}) = \varphi_i(a_{i+1} + b_{i+1}) = a_i + \varphi_i(b_{i+1}).
\]
Adding $b_i$ to each end of these equations yields
\[
m u_i + b_i = m u_i + \varphi_i(b_{i+1}) = \varphi_i(m u_{i+1} + b_{i+1}).
\]
Thus, $c := (m u_i + b_i)_{i\in \Z_{>0}}$ is an element of $I$ with $a+c = 2m u$, so that $c \in N$ and $a \le_N 2m u$.  This shows that $u$ is an order-unit in $N$.
 It is straightforward to check that $(N,u)$, together with the projection maps, is an inverse limit for the system $((M_i,u_i),\varphi_i)$ in $\Monu$.

 One can also start with $I$, take $I_u$ to be the set of those $a \in I$ such that $a \le_I nu$ for some $n \in \Z_{>0}$, and then show that $(I_u,u)$ with the projection maps is an inverse limit in $\Monu$ for the given inverse system.  The argument above amounts to showing that $I_u = I \cap M$.
\prskip

Our first result is that the $V$-functor is respected in surjective inverse limits of unit-regular rings.  Before we prove that, we need the following result of independent interest, which complements much of the material in \cite[Example 3.7]{M}.  Given a ring $R$ with idempotents $e,f\in R$, write $e\cong f$ when they are isomorphic, and write $e\sim f$ when they are conjugate. Conjugate idempotents are always isomorphic, and the fact that the converse holds in unit-regular rings is well-known.

\begin{lemma}[{cf.\ \cite[Lemmas 5 and 9]{GoodearlUnpublished}}]\label{Lemma:LiftIsosUnitReg}
Let $R$ be a ring, let $I\trianglelefteq R$ be an ideal, and let $\pi\colon R\to \overline{R}:=R/I$ be the natural quotient map.  Let $e,f\in R$ be idempotents.
\begin{itemize}
\item[\textup{(1)}] If $\,\overline{e}\sim p$ for some idempotent $p\in \overline{R}$, and if units lift from $\overline{R}$ to $R$, then we can choose an idempotent $g\in R$ satisfying $\overline{g}=p$ and $e\sim g$.
\item[\textup{(2)}] Assume $e\cong f$.  Then every isomorphism $\overline{e}\overline{R}\to \overline{f}\overline{R}$ lifts to an isomorphism $eR\to fR$ if and only if all units lift from $\overline{e}\overline{R}\overline{e}$ to $eRe$.
\end{itemize}
If $R$ is a unit-regular ring, then both lifting hypotheses hold.
\end{lemma}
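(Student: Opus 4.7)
The plan is to reduce both parts of the lemma to the natural identification $\Hom_R(eR,fR) \cong fRe$ given by evaluation at $e$, under which an isomorphism of right $R$-modules corresponds to an invertible pair $(s,t) \in fRe \times eRf$ (i.e.\ $st = f$, $ts = e$), and an automorphism of $eR$ corresponds to a unit of the corner ring $eRe$. Part (1) is then immediate: I would write $p = \overline{u}\,\overline{e}\,\overline{u}^{-1}$ for some $\overline{u} \in U(\overline{R})$, lift $\overline{u}$ to a unit $u \in R$ via the hypothesis, and set $g := u e u^{-1}$. This yields an idempotent conjugate to $e$ in $R$ whose image in $\overline{R}$ is $p$.

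For part (2) I will handle both directions via the identification above. In $(\Leftarrow)$, I first fix once and for all some $s_0 \in fRe$, $t_0 \in eRf$ with $s_0 t_0 = f$ and $t_0 s_0 = e$ (using $e \cong f$). Given an isomorphism $\overline{\varphi}\colon \overline{e}\overline{R} \to \overline{f}\overline{R}$ realized as left multiplication by $\overline{s} \in \overline{f}\overline{R}\overline{e}$, I observe that $\overline{t_0 s}$ is a unit of $\overline{e}\overline{R}\overline{e}$ (since it represents the endomorphism $L_{\overline{t_0}} \circ \overline{\varphi}$ of $\overline{e}\overline{R}$). I then lift this \emph{derived} unit to a unit $w \in eRe$ by the hypothesis, and check that left multiplication by $s_0 w \in fRe$ is an isomorphism $eR \to fR$ (with inverse left multiplication by $w^{-1} t_0$) whose image under $\pi$ is left multiplication by $\overline{s_0}\,\overline{w} = \overline{s_0 t_0 s} = \overline{s}$. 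For $(\Rightarrow)$, given a unit $\overline{w} \in \overline{e}\overline{R}\overline{e}$, I pick any isomorphism $\overline{\psi_0}\colon \overline{e}\overline{R} \to \overline{f}\overline{R}$ (which exists because $\overline{e} \cong \overline{f}$), form the composite $\overline{\psi_0} \circ L_{\overline{w}}$, apply the hypothesis to lift both $\overline{\psi_0}$ and this composite to isomorphisms $\psi_0, \varphi\colon eR \to fR$, and recognize $\psi_0^{-1}\varphi$ as an automorphism of $eR$; the corresponding unit of $eRe$ projects to $\overline{w}$.

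For the concluding assertion, when $R$ is unit-regular the first lifting hypothesis is exactly \cite[Lemma 3.5]{Bac}, already cited just before Lemma \ref{L:Lifting}. For the second, I will use that corners of unit-regular rings are unit-regular and that $\overline{e}\,\overline{R}\,\overline{e}$ is naturally isomorphic to $eRe/eIe$; applying the same cited lemma to this quotient of the unit-regular ring $eRe$ then lifts units from $\overline{e}\,\overline{R}\,\overline{e}$ to $eRe$. The main obstacle I anticipate is the $(\Leftarrow)$ direction of (2): one is tempted to simply lift $\overline{s}$ to some element of $fRe$, but arbitrary such lifts need not represent isomorphisms, since invertibility as a module generator is not detected modulo $I$ without passing to units of a corner. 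The key device is to lift instead the derived unit $\overline{t_0 s} \in \overline{e}\overline{R}\overline{e}$, and transport back via the pre-chosen base isomorphism $L_{s_0}$.
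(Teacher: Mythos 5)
Your proposal is correct and follows essentially the same route as the paper's proof: both parts hinge on fixing a base isomorphism $eR\to fR$ coming from $e\cong f$ and using it to convert the given isomorphism $\overline{e}\overline{R}\to\overline{f}\overline{R}$ into a unit of $\overline{e}\overline{R}\overline{e}$ (your $\overline{t_0 s}$ is exactly the paper's derived unit), lifting that unit, and transporting back. The only cosmetic differences are that in the $(\Rightarrow)$ direction you lift two isomorphisms where the paper reuses the already-present lift $L_{s_0}$ and lifts only one, and your verification of the final unit-regular case via $\overline{e}\overline{R}\overline{e}\cong eRe/eIe$ matches the paper's one-line appeal to corners of unit-regular rings.
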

\begin{proof}
(1) Fix $v\in U(\overline{R})$ with $p=v^{-1}\overline{e}v$.  Let $u\in U(R)$ be a lift of $v$.  Take $g:=u^{-1}eu$.

(2) Since $e\cong f$,  we can fix $a\in eRf$ and $b\in fRe$ such that $ab=e$ and $ba=f$.  Left multiplications by $\overline{a}$ and $\overline{b}$ induce inverse isomorphisms between $\overline{e}\overline{R}$ and $\overline{f}\overline{R}$.

$(\Longrightarrow)$: If $v$ is a unit of $\overline{e}\overline{R}\overline{e}$, then (multiplication by) $\overline{b} v$ and $v^{-1} \overline{a}$ give inverse isomorphisms between $\overline{e}\overline{R}$ and $\overline{f}\overline{R}$.  By assumption, $\overline{b} v$ lifts to some $w \in fRe$ which provides an isomorphism $eR \to fR$, say with inverse provided by $w' \in fRe$.  Then $aw$ is a unit of $eRe$ (with inverse $w'b$), and $\overline{aw} = \overline{a} \overline{b} v = v$.

$(\Longleftarrow)$: Given an isomorphism $\varphi\colon \overline{e}\overline{R}\to \overline{f}\overline{R}$, we can view it as left multiplication by an element $x\in \overline{f}\overline{R}\overline{e}$, whose inverse map is left multiplication by some $y\in \overline{e}\overline{R}\overline{f}$.  In particular, $xy=\overline{f}$ and $yx=\overline{e}$.

Now, we find
\begin{eqnarray*}
(y\overline{b})(\overline{a}x)=y\overline{f}x=yx=\overline{e} \quad\text{ and }\quad
(\overline{a}x)(y\overline{b})=\overline{a}\overline{f}\overline{b}=\overline{ab}=\overline{e}.
\end{eqnarray*}
So, $y\overline{b}\in U(\overline{e}\overline{R}\overline{e})$, with inverse $\overline{a}x$.  By assumption, $y\overline{b}$ lifts to some $w\in U(eRe)$, say with inverse $w'$.  Then setting $u:=wa\in eRf$ and $v:=bw'\in fRe$, they satisfy
\begin{eqnarray*}
& & uv = wew'=ww'=e,\\
& & vu=bw'wa=bea=ba=f, \text{ and}\\
& & \overline{v}=\overline{bw'}=\overline{b} (y \overline{b})^{-1} = \overline{b} \overline{a} x= x.
\end{eqnarray*}
So, left multiplication by $v$ is an isomorphism $eR\to fR$ that lifts $\varphi$.

Finally, we prove the last sentence.  As unit-regularity passes to corner rings, the lifting hypotheses follow from \cite[Lemma 3.5]{Bac}.
\end{proof}

\begin{theorem}[{\cite[Proposition 7]{GoodearlUnpublished}}]\label{Thm:VInvLimit}
Let $(R_i,\varphi_i)$ be an inverse system of unit-regular rings, where each $\varphi_i$ is a surjective ring homomorphism.  Fix $R=\varprojlim R_i$, with projection maps $\pi_i\colon R\to R_i$.  Also fix $(N,u)$ to be an inverse limit in $\Monu$ of the corresponding inverse system $((V(R_i),[R_i]),V(\varphi_i))$, with projection maps $p_i\colon N\to V(R_i)$.

There is a unique $\Monu$-morphism $\eta\colon (V(R),[R])\to (N,u)$ such that $p_i\eta=V(\pi_i)$ for each $i\in \Z_{>0}$.  Moreover, $\eta$ is a $\Monu$-isomorphism.
\end{theorem}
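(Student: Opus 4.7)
The plan is first to obtain $\eta$ from the universal property of $(N,u)$, and then to verify bijectivity by transporting the conclusions of Lemma~\ref{Lemma:LiftIsosUnitReg} along the inverse system: part~(1) for surjectivity (lifting idempotents) and part~(2) for injectivity (lifting isomorphisms between them). Throughout, I use that $M_n(R_i)$ is unit-regular for every $n, i$, and that $M_n(R) = \varprojlim M_n(R_i)$ inside $\prod M_n(R_i)$.

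For existence and uniqueness of $\eta$, note that each $V(\pi_i)\colon V(R)\to V(R_i)$ sends $[R]$ to $[R_i]$, and the identity $\varphi_i\pi_{i+1}=\pi_i$ gives $V(\varphi_i)V(\pi_{i+1})=V(\pi_i)$. Thus $(V(\pi_i))$ is a compatible family in $\Monu$, and the universal property of $(N,u)$ supplies a unique $\Monu$-morphism $\eta$ with $p_i\eta=V(\pi_i)$.

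For surjectivity of $\eta$, let $a\in N$ with $a\le_N nu$ for some $n\in\Z_{>0}$. Then each $p_i(a)\le [R_i^n]$ in $V(R_i)$, so $p_i(a)=[e_i]$ for some idempotent $e_i\in M_n(R_i)$. The coherence $V(\varphi_i)([e_{i+1}])=[e_i]$ says that $M_n(\varphi_i)(e_{i+1})\cong e_i$ in the unit-regular ring $M_n(R_i)$, hence the two are conjugate there. Inductively, starting from $e'_1:=e_1$ and assuming $e'_i$ constructed with $[e'_i]=[e_i]$, apply Lemma~\ref{Lemma:LiftIsosUnitReg}(1) to $M_n(\varphi_i)\colon M_n(R_{i+1})\twoheadrightarrow M_n(R_i)$ (units lift by unit-regularity) to find an idempotent $e'_{i+1}\sim e_{i+1}$ in $M_n(R_{i+1})$ with $M_n(\varphi_i)(e'_{i+1})=e'_i$. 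The coherent sequence $(e'_i)$ corresponds to an idempotent $e\in M_n(R)$ with $M_n(\pi_i)(e)=e'_i$, and then $p_i(\eta([e]))=V(\pi_i)([e])=[e'_i]=p_i(a)$ for every $i$, so $\eta([e])=a$.

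For injectivity, suppose $[e],[f]\in V(R)$ with $\eta([e])=\eta([f])$, and choose $n$ so that $e,f\in M_n(R)$. Writing $e_i:=M_n(\pi_i)(e)$ and $f_i:=M_n(\pi_i)(f)$, we have $e_i\cong f_i$ in the unit-regular ring $M_n(R_i)$ for every $i$. Pick any inverse pair $(a_1,b_1)$ with $a_1\in e_1M_n(R_1)f_1$, $b_1\in f_1M_n(R_1)e_1$, $a_1b_1=e_1$, $b_1a_1=f_1$. Inductively, given coherent pairs $(a_j,b_j)$ for $j\le k$, apply Lemma~\ref{Lemma:LiftIsosUnitReg}(2) to the quotient $M_n(\varphi_k)\colon M_n(R_{k+1})\to M_n(R_k)$ (with idempotents $e_{k+1}\cong f_{k+1}$): since $M_n(R_{k+1})$ is unit-regular, units lift from $e_k M_n(R_k) e_k$ to $e_{k+1}M_n(R_{k+1})e_{k+1}$, so the isomorphism given by left multiplication by $a_k$ lifts to an isomorphism $e_{k+1}M_n(R_{k+1})\to f_{k+1}M_n(R_{k+1})$, yielding elements $a_{k+1},b_{k+1}$ compatible with $(a_k,b_k)$. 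Assembling the coherent sequences produces $a\in eM_n(R)f$ and $b\in fM_n(R)e$ with $ab=e$ and $ba=f$, so $[e]=[f]$ in $V(R)$. Since $\eta$ is a bijective $\Monu$-morphism, it is an isomorphism.

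The main obstacle is the level-by-level matching: conjugation or isomorphism data chosen at level $i$ may fail to be the image of anything at level $i+1$. This is exactly the obstruction that Lemma~\ref{Lemma:LiftIsosUnitReg} dissolves, and the unit-lifting hypothesis for corner rings --- which holds automatically in the unit-regular setting by \cite[Lemma 3.5]{Bac} --- is what makes the two inductions go through.
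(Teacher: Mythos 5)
Your proposal is correct and follows essentially the same route as the paper's proof: existence and uniqueness of $\eta$ from functoriality and the universal property, surjectivity via Lemma~\ref{Lemma:LiftIsosUnitReg}(1) to lift conjugate idempotents level by level, and injectivity via Lemma~\ref{Lemma:LiftIsosUnitReg}(2) to lift the isomorphisms, using throughout that matrix rings over unit-regular rings are unit-regular. No substantive differences to report.
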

\begin{proof}
The morphisms
\[
V(\pi_i)\colon (V(R),[R])\to (V(R_i),[R_i])
\]
satisfy $V(\varphi_i)V(\pi_{i+1})=V(\pi_{i})$ for each integer $i\geq 1$, by functoriality of $V$, together with the equalities $\varphi_i\circ \pi_{i+1}=\pi_i$.  Thus, the existence and uniqueness of $\eta$ are clear, from the universal property of inverse limits.

Next we prove injectivity.  Consider $v,w\in V(R)$ such that $\eta(v)=\eta(w)$.  Then there exist idempotents $e,f\in M_n(R)$ for some integer $n\geq 1$ such that $v=[e]$ and $w=[f]$.  Write $e=(e_i)$ and $f=(f_i)$ for idempotents $e_i,f_i\in M_n(R_i)$, by identifying $M_n(R)$ with the inverse limit of the system $(M_n(R_i),M_n(\varphi_i))$.  Then
\[
[e_i]=V(\pi_i)([e])=p_i\eta(v)=p_i\eta(w)=V(\pi_i)([f])=[f_i]
\]
for each integer $i\geq 1$.  Thus, $e_i$ and $f_i$ are isomorphic in $M_n(R_i)$ for each integer $i\geq 1$.

Fix an isomorphism $e_1M_n(R_1)\to f_1M_n(R_1)$, viewed as left multiplication by an element $x_1\in f_1M_n(R_1)e_1$, say with inverse $y_1\in e_1M_n(R_1)f_1$.  By using Lemma \ref{Lemma:LiftIsosUnitReg}(2) and the fact that matrix rings over unit-regular rings are still unit-regular, then there is an element $x_2\in f_2M_n(R_2)e_2$ such that left multiplication by $x_2$ yields an isomorphism
\[
e_2M_n(R_2)\to f_2M_n(R_2),
\]
and $\varphi_1(x_2)=x_1$.  The inverse map is left multiplication by some $y_2\in e_2M_n(R_2)f_2$ that lifts $y_1$.  Recursively repeating this process, we can create a compatible sequence of elements $x=(x_i)\in M_n(R)$, whose inverses form a compatible sequence $y=(y_i)\in M_n(R)$.  Since $e=xy$ and $f=yx$, then $e\cong f$, hence $v=[e]=[f]=w$ as desired.

Finally, we prove surjectivity.  Fix some arbitrary $t=(t_i)\in N$.  Then $t\leq nu$ for some integer $n\geq 1$, and so (for each integer $i\geq 1$) we have $t_i\leq n[R_i]$.  Hence, $t_i=[e_i]$ for some idempotent $e_i\in M_n(R_i)$.  Since $V(\varphi_i)(t_{i+1})=t_i$, we have $\varphi_i(e_{i+1})\cong e_i$.

Fix $f_1=e_1$.  By part (1) and the last sentence of Lemma \ref{Lemma:LiftIsosUnitReg}, there is an idempotent $f_2\in M_n(R_2)$ with $e_2\cong f_2$ and $\varphi_1(f_2)=f_1$.  Recursively repeating this process, there is a compatible sequence of idempotents $f=(f_i)\in M_n(R)$ with $f_i\cong e_i$ for each integer $i\geq 1$.  Since
\[
\pi_i\eta([f])=V(\pi_i)([f])=[f_i]=[e_i]=\pi_i(t)
\]
for each integer $i\geq 1$, we see that $\eta([f])=t$.  Therefore, $\eta$ is surjective.
\end{proof}
\prskip

\begin{remark}
The surjectivity hypothesis in Theorem \ref{Thm:VInvLimit} is certainly not superfluous.  Indeed, Construction 4.3 produced an inverse limit of unit-regular rings that is regular but not unit-regular.  However, a regular ring $R$ is unit-regular if and only if the monoid $V(R)$ is cancellative. So $V(R)$ cannot be isomorphic to the inverse limit of the $V(R_i)$, otherwise the limit is a submonoid of the cancellative monoid $\prod V(R_i)$, whence cancellative. In Section 7, we show that even when the connecting maps are surjective, the monoid of the inverse limit may not match the inverse limit of the individual $V(R_i)$. This suggests there are limitations to what inverse limits of the associated monoids $V(R_i)$ can tell us about $\varprojlim R_i$. \hfill $\square$
\end{remark}
\prskip

Theorem \ref{Thm:VInvLimit} and its proof naturally generalize to give:

\begin{corollary}
Let $(R_i,\varphi_i)$ be an inverse system of unit-regular rings, where each $\varphi_i$ is a surjective ring homomorphism.  Fix $R=\varprojlim R_i$, with projection maps $\pi_i\colon R\to R_i$.  Also fix $(G,u)$ to be an inverse limit, in the category of partially ordered abelian groups with distinguished order-unit, of the corresponding inverse system $((K_0(R_i),[R_i]),K_0(\varphi_i))$, with projection maps $p_i\colon G\to K_0(R_i)$.

There is a unique morphism $\eta\colon (K_0(R),[R])\to (G,u)$ such that $p_i\eta=K_0(\pi_i)$ for each $i\in \Z_{>0}$.  Moreover, $\eta$ is an isomorphism of partially ordered abelian groups.
\end{corollary}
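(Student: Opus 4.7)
The plan is to imitate the proof of Theorem \ref{Thm:VInvLimit} after first interpreting the inverse limit in the category of partially ordered abelian groups with distinguished order-unit. Since each $\varphi_i$ is surjective and each $R_i$ is unit-regular, Theorem \ref{T:surjective} gives that $R$ is unit-regular; hence $V(R)$ and each $V(R_i)$ are cancellative, so $V(R) \hookrightarrow K_0(R)$ and $V(R_i) \hookrightarrow K_0(R_i)$ as positive cones, and in each case $K_0$ is just the Grothendieck group of the corresponding $V$-monoid.

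First I would describe $(G,u)$ concretely. Let $H = \varprojlim K_0(R_i)$ be the inverse limit in the category of abelian groups, endowed with the coordinatewise partial order inherited from $\prod K_0(R_i)$, and set
\[
G := \{ a \in H \, : \, -nu \le a \le nu \text{ for some } n \in \Zpos \}.
\]
A routine check shows that $(G,u)$, together with the restricted projections $p_i$, is the inverse limit in the category of partially ordered abelian groups with distinguished order-unit, and that its positive cone $G^+ = \{ a \in G : a_i \in V(R_i) \text{ for all } i \}$ paired with $u$ is precisely the $\Monu$-inverse limit of $((V(R_i),[R_i]),V(\varphi_i))$ from Section 6. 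Combining this observation with Theorem \ref{Thm:VInvLimit} yields a $\Monu$-isomorphism $\eta^+ \colon (V(R),[R]) \to (G^+,u)$.

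Next, I would produce $\eta$ via universal properties: the composite $V(R) \xrightarrow{\eta^+} G^+ \hookrightarrow G$ is a monoid homomorphism from the cancellative monoid $V(R)$ into an abelian group, so it extends uniquely to a group homomorphism $\eta \colon K_0(R) \to G$. The equalities $p_i \eta = K_0(\pi_i)$ are immediate on the generators $[e] \in V(R)$ and hence hold throughout $K_0(R)$, which also yields uniqueness. Since $\eta$ carries $K_0(R)^+ = V(R)$ onto $G^+$ via $\eta^+$, order preservation and order reflection by $\eta$ come for free once bijectivity is established.

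For bijectivity I would recycle the arguments from Theorem \ref{Thm:VInvLimit}. For injectivity, if $\alpha = [e] - [f] \in K_0(R)$ with $\eta(\alpha)=0$, then cancellativity of each $V(R_i)$ gives $[\pi_i(e)] = [\pi_i(f)]$ in $V(R_i)$ for all $i$, and the compatible isomorphism-lifting argument (Lemma \ref{Lemma:LiftIsosUnitReg}(2), applied recursively to matrix rings over the unit-regular $R_i$) produces an isomorphism $e \cong f$ in $M_n(R)$, whence $\alpha = 0$. For surjectivity, given $t \in G$, choose $n$ with $t + nu \in G^+$, use $\eta^+$ to find $\beta \in V(R)$ with $\eta^+(\beta) = t + nu$, and set $\alpha := \beta - n[R]$, so that $\eta(\alpha) = t$. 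The main obstacle I anticipate is purely set-up: the careful identification of $(G,u)$ and its positive cone in the order-unit category, in parallel with the $\Monu$ construction recalled in Section 6. Once that scaffolding is in place, every remaining step is either a direct transcription of the proof of Theorem \ref{Thm:VInvLimit} or a formal consequence of the universal property of the Grothendieck group.
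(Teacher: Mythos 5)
Your proposal is correct and follows exactly the route the paper intends: the paper gives no separate proof of this corollary, stating only that Theorem \ref{Thm:VInvLimit} ``and its proof naturally generalize,'' and your argument is a faithful working-out of that generalization (identify $(G,u)$ concretely so that its positive cone is the $\Monu$-inverse limit of the $V(R_i)$, invoke unit-regularity of $R$ via Theorem \ref{T:surjective} to see $K_0$ as the Grothendieck group of the cancellative monoid $V$, and transport the bijectivity of the $\Monu$-isomorphism $\eta^+$ to the group level). Nothing further is needed.
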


A careful examination of the proof of injectivity in Theorem \ref{Thm:VInvLimit} shows  we can weaken the hypothesis that the $R_i$ are unit-regular; we only need the ability to lift units through corners of matrix rings.  We record that result, as it will have some bearing on separativity for rings.

\begin{proposition}\label{Prop:MatrixUnitCornerLifting}
Let $(R_i,\varphi_i)_{i \in \Z_{>0}}$ be an inverse system of rings, where each $\varphi_i$ is a surjective ring homomorphism.  Fix $R=\varprojlim R_i$, with projection maps $\pi_i\colon R\to R_i$.  Assume that for any $i,n\in \Z_{>0}$, and for any idempotent $e\in M_n(R_{i+1})$, all units of $\varphi_i(e)M_n(R_i)\varphi_i(e)$ lift to units of $eM_n(R_{i+1})e$.  Also fix $(N,u)$ to be an inverse limit in $\Monu$ of the inverse system $((V(R_i),[R_i]),V(\varphi_i))_{i \in \Z_{>0}}$, with projection maps $p_i\colon N\to V(R_i)$.

The unique morphism $\eta\colon (V(R),[R])\to (N,u)$ such that $p_i\eta = V(\pi_i)$ for each $i \in \mathbb{Z}_{>0}$ is injective.
\end{proposition}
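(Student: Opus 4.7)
The approach is to follow the injectivity portion of the proof of Theorem \ref{Thm:VInvLimit} essentially verbatim, isolating the single place where unit-regularity of the $R_i$ was invoked and replacing it with the standing hypothesis that units lift through corners of matrix rings modulo each $\ker \varphi_i$. Existence and uniqueness of $\eta$ are immediate from the universal property of inverse limits in $\Monu$ together with the identities $V(\varphi_i)V(\pi_{i+1}) = V(\pi_i)$, which hold by functoriality of $V$; only injectivity requires new work.

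The plan is to take $v, w \in V(R)$ with $\eta(v) = \eta(w)$ and represent them as $v = [e]$, $w = [f]$ for idempotents $e, f$ in a common matrix ring $M_n(R)$. Using the canonical identification $M_n(R) \cong \varprojlim M_n(R_i)$ under the surjective connecting maps $M_n(\varphi_i)$, I would write $e = (e_i)$ and $f = (f_i)$ with each $e_i, f_i \in M_n(R_i)$ an idempotent. Applying $p_i$ to the equation $\eta(v) = \eta(w)$ yields $[e_i] = V(\pi_i)([e]) = V(\pi_i)([f]) = [f_i]$ in $V(R_i)$, so $e_i \cong f_i$ in $M_n(R_i)$ for every $i \in \Z_{>0}$.

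The heart of the argument is to build, recursively in $i$, a compatible pair of sequences $x = (x_i) \in M_n(R)$ with $x_i \in f_i M_n(R_i) e_i$ and $y = (y_i) \in M_n(R)$ with $y_i \in e_i M_n(R_i) f_i$ whose left multiplications provide mutually inverse isomorphisms $e_i M_n(R_i) \cong f_i M_n(R_i)$. The base case is any such $(x_1, y_1)$ witnessing $e_1 \cong f_1$. For the inductive step at level $i+1$, Lemma \ref{Lemma:LiftIsosUnitReg}(2), applied with $R = M_n(R_{i+1})$ and the ideal $\ker M_n(\varphi_i)$, lifts the isomorphism $e_i M_n(R_i) \to f_i M_n(R_i)$ to an isomorphism $e_{i+1} M_n(R_{i+1}) \to f_{i+1} M_n(R_{i+1})$ whose image under $M_n(\varphi_i)$ matches the original data; the hypothesis required by that lemma---namely that units of $\varphi_i(e_{i+1}) M_n(R_i) \varphi_i(e_{i+1})$ lift to units of $e_{i+1} M_n(R_{i+1}) e_{i+1}$---is exactly the standing assumption of the proposition. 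The resulting $x, y$ in $M_n(R)$ then assemble into an isomorphism $e M_n(R) \cong f M_n(R)$, so $v = [e] = [f] = w$.

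The only potential obstacle is guaranteeing that the recursive lift can be carried out at every level under the weakened hypothesis, and this is handled cleanly because the unit-lifting assumption on $\varphi_i$-corners of matrix rings is precisely what Lemma \ref{Lemma:LiftIsosUnitReg}(2) demands. No appeal to unit-regularity of the $R_i$, nor to the last sentence of Lemma \ref{Lemma:LiftIsosUnitReg} (which in the earlier theorem supplied this lifting automatically), is required, so the injectivity argument from the proof of Theorem \ref{Thm:VInvLimit} carries through unchanged once the lifting is granted by hypothesis.
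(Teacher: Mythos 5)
Your proposal is correct and is exactly the argument the paper intends: the paper justifies Proposition \ref{Prop:MatrixUnitCornerLifting} by noting that the injectivity half of the proof of Theorem \ref{Thm:VInvLimit} invokes unit-regularity of the $R_i$ only to supply the unit-lifting hypothesis of Lemma \ref{Lemma:LiftIsosUnitReg}(2) for corners of matrix rings, which the proposition now assumes directly. You have correctly isolated that single point of dependence and verified that the recursive lifting of the mutually inverse elements $x_i, y_i$ goes through unchanged.
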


\begin{corollary}\label{C:SepSurjective}
Using the same notation from \textup{Proposition \ref{Prop:MatrixUnitCornerLifting}}, and assuming the same lifting conditions, if each $R_i$ is separative, then $R$ is separative.
\end{corollary}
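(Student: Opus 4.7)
The plan is to reduce the separativity of $R$ to the fact that separativity, viewed as a universally quantified monoid condition, passes freely to submonoids and to direct products. Concretely, recall that a commutative monoid $M$ is separative when $2a = a+b = 2b$ forces $a = b$ for all $a,b \in M$, and a ring is separative precisely when its monoid $V(R)$ is. Since the implication defining separativity is Horn with all quantifiers universal, any submonoid of a separative monoid is separative, and any direct product of separative monoids is separative.

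With those observations in hand, I would proceed as follows. First, invoke Proposition \ref{Prop:MatrixUnitCornerLifting} to obtain the canonical morphism $\eta\colon (V(R),[R])\to (N,u)$ and conclude that it is injective, so $V(R)$ embeds as a submonoid of $N$. Next, use the explicit description of inverse limits in $\Monu$ given in the paragraphs preceding Theorem \ref{Thm:VInvLimit}: the object $N$ is realised as a submonoid of the Cartesian product $\prod_{i\in \Z_{>0}} V(R_i)$, namely the intersection $I_u = I\cap M$, via the family of projections $p_i$. Consequently we have a chain of submonoid inclusions
\[
V(R) \;\hookrightarrow\; N \;\hookrightarrow\; \prod_{i\in \Z_{>0}} V(R_i).
\]

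Now, because each $R_i$ is separative, each $V(R_i)$ is a separative monoid, and therefore the product $\prod_{i\in \Z_{>0}} V(R_i)$ is separative (check componentwise). Separativity then descends to both submonoids in the chain above, so $V(R)$ is separative, i.e.\ $R$ is separative, which completes the proof.

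The only non-trivial input is Proposition \ref{Prop:MatrixUnitCornerLifting}, which supplies the injectivity of $\eta$ under the matrix-corner unit-lifting hypothesis; once that is granted, there is essentially no obstacle, because everything else is a formal monoid-theoretic observation. In particular, surjectivity of $\eta$ (which in general fails without unit-regularity, cf.\ the remark after Theorem \ref{Thm:VInvLimit}) is not needed here: injectivity alone suffices to transport separativity from $\prod V(R_i)$ back to $V(R)$.
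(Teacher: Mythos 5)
Your proposal is correct and is essentially the argument the paper gives: both proofs rest on the injectivity of $\eta$ from Proposition \ref{Prop:MatrixUnitCornerLifting} together with the observation that the separativity implication can be verified componentwise in $\prod_{i} V(R_i)$. The paper simply carries out the componentwise check explicitly on $[e_i]$ and $[f_i]$ rather than phrasing it as ``separativity is a universal Horn condition, hence passes to products and submonoids,'' but the mathematical content is identical.
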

\begin{proof}
Given a ring $R$, then by definition, to say that $R$ is separative means that $V(R)$ satisfies
\[
[e]+[e]=[e]+[f]=[f]+[f]\Longrightarrow [e]=[f]
\]
for any two idempotents $e,f$ in matrix rings over $R$.

Assume the premise of the implication, in the case when $R=\varprojlim R_i$.  Write $\eta([e])=([e_i])$ and $\eta([f])=([f_i])$ as compatible sequences from the monoids $V(R_i)$.  Then (for each integer $i\geq 1$) note that $[e_i]+[e_i]=[e_i]+[f_i]=[f_i]+[f_i]$.  Separativity implies that $[e_i]= [f_i]$ for each $i\geq 1$.  Hence $\eta([e])=\eta([f])$. Injectivity of $\eta$ yields $[e]=[f]$.
\end{proof}
\prskip

For inverse systems of regular rings, Theorem \ref{T:weak lifting} establishes Corollary \ref{C:SepSurjective} with weaker lifting conditions.  The lifting conditions can also be weakened for separative exchange rings, as follows, so that Theorem \ref{T:weak lifting} can be extended to inverse systems of exchange rings (Theorem \ref{T:weak lifting exchange}).
\prskip

Recall that an ideal $J$ of a ring $R$ is called a \emph{trace ideal} if there exists a finitely generated projective $R$-module $P$ such that $J = \sum \{ f(P) : f \in \Hom_R(P,R) \}$, the \emph{trace ideal of $P$}. If $R$ is an exchange ring, then the trace ideals are exactly the ideals generated by a single idempotent. Indeed, since $R$ is exchange, we have
$P\cong e_1R\oplus \cdots \oplus e_nR$ for some idempotents $e_1,\dots ,e_n\in R$, and then the trace ideal of $P$ is $J = Re_1R+\cdots +Re_nR$. By \cite[Lemma 2.1]{AGOR}, there exists an idempotent $e\in R$ such that $J= ReR$.  Conversely, $ReR$ is the trace ideal of $eR$, for any idempotent $e \in R$.

\begin{proposition}  \label{P:LiftSepExch}
 Let $R$ be a separative exchange ring and let $I$ be an ideal of $R$. Set $\overline{R}:= R/I$ and denote by $\overline{x}$, for $x\in R$, the class of $x$ in $\overline{R}$.  Let $E$ be a set of idempotents in $R$ such that the collection $\{ ReR : e \in E \}$ contains all trace ideals of $R$.
 Then the following conditions are equivalent.
 \begin{enumerate}
  \item For any $n>0$ and for any idempotent $f\in M_n(R)$, all units of $\overline{f}M_n(\overline{R})\overline{f}$ lift to units of $fM_n(R)f$.
  \item For any idempotent $e \in E$, all units of $\overline{e}\overline{R} \overline{e}$ lift to units of $eRe$.
  \item For each $e \in E$, the natural map $K_1(eRe) \to K_1(\overline{e}\overline{R}\overline{e})$ is surjective.
 \end{enumerate}
 \end{proposition}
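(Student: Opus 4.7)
The plan is to prove the equivalences via the cycle (1) $\Rightarrow$ (2) $\Leftrightarrow$ (3) together with (2) $\Rightarrow$ (1). The first implication is immediate on taking $n = 1$ and $f = e$.

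For (2) $\Leftrightarrow$ (3), I would first note that both $eRe$ and $\overline{e}\overline{R}\overline{e}$ are separative exchange rings: corners and factor rings preserve these properties (the latter by the Extension Theorem of Section 1). By the GE theorem for separative exchange rings, the natural map $U(S) \twoheadrightarrow K_1(S)$ is surjective for any such $S$, so every class in $K_1$ is represented by a unit. The direction (2) $\Rightarrow$ (3) follows because the surjection of unit groups composes with $U(\overline{e}\overline{R}\overline{e}) \twoheadrightarrow K_1(\overline{e}\overline{R}\overline{e})$ and factors through $K_1(eRe)$. For (3) $\Rightarrow$ (2), given a unit $\overline{u} \in U(\overline{e}\overline{R}\overline{e})$, I would use (3) together with the GE theorem to produce a preimage unit $v \in U(eRe)$ with $[\overline{v}] = [\overline{u}]$, and then lift the residual $\overline{u}\overline{v}^{-1}$, which has trivial $K_1$-class. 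By Whitehead's lemma this residual becomes an elementary matrix in some $M_k(\overline{e}\overline{R}\overline{e})$; lifting the elementary factors termwise and descending to a scalar unit in $eRe$ via the separative GE-structure of the corner should finish the argument.

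For (2) $\Rightarrow$ (1), let $f \in M_n(R)$ be idempotent and $\overline{u} \in U(\overline{f}M_n(\overline{R})\overline{f})$. The hypothesis on $E$ lets me pick $e \in E$ with $ReR$ equal to the trace ideal of the projective module $fR^n$. The central technical step is a Morita-type invariance: in an exchange ring, two idempotents generating the same two-sided ideal are mutually dominated in $V$ by direct-sum multiples, and separativity upgrades this stable dominance to a K-theoretic equivalence between the corner rings $eRe$ and $fM_n(R)f$ (compatible with the ideals $eIe$ and $fM_n(I)f$). Through this equivalence, condition (2) for $e$ transfers to the unit-lifting property for $fM_n(R)f$, which is exactly what (1) asserts for $f$.

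The hardest part will be the Morita-invariance step in (2) $\Rightarrow$ (1)---establishing that the unit-lifting (or equivalently $K_1$-surjectivity) property depends only on the trace ideal of the idempotent, and not on the idempotent itself, via separativity and exchange used in tandem. A secondary subtlety, in the (3) $\Rightarrow$ (2) step, is the descent from an elementary-matrix lift in $M_k(eRe)$ back to a scalar unit lift in $eRe$, again relying on the GE structure of the separative exchange corner.
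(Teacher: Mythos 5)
Your overall architecture is sound and close to the paper's: the paper also runs the trivial implication (1)$\Rightarrow$(2), gets (2)$\Rightarrow$(3) from the surjectivity of $GL_1(\overline{e}\overline{R}\overline{e})\to K_1(\overline{e}\overline{R}\overline{e})$ for separative exchange rings (the GE theorem, \cite[Theorem 2.8]{AGOR}), and closes the cycle by transferring $K_1$-surjectivity from $eRe$ to $fM_n(R)f$ via Morita invariance of $K_1$ --- exactly the trace-ideal matching you describe. The paper simply proves (3)$\Rightarrow$(1) directly rather than your pair (3)$\Rightarrow$(2) and (2)$\Rightarrow$(1), which spares it from having to transfer a unit-lifting statement across the Morita equivalence: only the $K_1$ statement is moved, and unit lifting is recovered at the very end, once, for the corner $fM_n(R)f$ itself.

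The genuine gap is the descent step, which you flag as a ``secondary subtlety'' but which is in fact the substantive content. After lifting the elementary factors you obtain a unit $W$ of $M_k(eRe)$ (indeed a product of elementary matrices) lying over $\diag(\overline{u}\,\overline{v}^{-1},1,\dots,1)$; getting from there to a unit of $eRe$ lying over $\overline{u}\,\overline{v}^{-1}$ is not a formal consequence of ``the separative GE-structure of the corner'' --- unit lifting, unlike $K_1$-surjectivity, does not pass through stabilization or Morita equivalence for free. The paper fills exactly this hole by quoting \cite[Theorem 2.4]{Perera}: for an exchange ideal, a unit of the quotient lifts if and only if its image under the connecting map $\delta\colon K_1(\overline{f}M_n(\overline{R})\overline{f})\to K_0(fM_n(I)f)$ vanishes, and the $K_1$-surjectivity established via Morita invariance forces $\delta=0$. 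That theorem is a nontrivial exchange-ring result (the exchange hypothesis on the ideal is essential), not something recoverable by termwise lifting of elementary matrices. If you either cite that lifting theorem or supply an independent proof of the descent from $M_k(eRe)$ to $eRe$, your argument is complete; the same citation also repairs the implicit use of ``unit-lifting or equivalently $K_1$-surjectivity'' in your (2)$\Rightarrow$(1).
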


  \begin{proof}
  (1)$\Longrightarrow$(2) is obvious.

  (2)$\Longrightarrow$(3): Let $e \in E$, and assume that all units of $\overline{e}\overline{R} \overline{e}$ lift to units in $eRe$.
  Since the natural map $GL_1(\overline{e}\overline{R}\overline{e}) \to K_1(\overline{e}\overline{R}\overline{e})$ is surjective by \cite[Theorem 2.8]{AGOR}, it follows that the map $K_1(eRe)\to K_1(\overline{e}\overline{R}\overline{e})$ is surjective.

  (3)$\Longrightarrow$(1): Let $n>0$ and let $f$ be an idempotent in $M_n(R)$. Let $J$ be the ideal of $R$ generated by the entries of $f$. Then $J$ is a trace ideal, corresponding to the finitely generated projective module $P:=fR^n$, so there is an idempotent $e\in E$ such that $J=ReR$.  By (3), the natural map $K_1(eRe) \to K_1(\overline{e}\overline{R}\overline{e})$ is surjective. Observe that $eRe$ and $fM_n(R)f$ are Morita-equivalent unital rings, and so are $\overline{e}\overline{R}\overline{e}$ and $\overline{f}M_n(\overline{R}) \overline{f}$. There are isomorphisms $K_1(eRe)\to K_1(fM_n(R)f)$ and $K_1(\overline{e}\overline{R}\overline{e}) \to K_1(\overline{f}M_n(\overline{R}) \overline{f})$ such that the following diagram is commutative:
\[
\xymatrix{
K_1(eRe) \ar[r] \ar[d]_{\cong} &K_1(\overline{e}\overline{R}\overline{e}) \ar[d]^{\cong}  \\
K_1(fM_n(R)f) \ar[r] &K_1(\overline{f}M_n(\overline{R})\overline{f}).
}
     \]
 It follows that the map $K_1(fM_n(R)f)\to K_1(\overline{f}M_n(\overline{R})\overline{f})$ is surjective. Hence the connecting map
  $\delta \colon K_1(\overline{f}M_n(\overline{R})\overline{f}) \to K_0(fM_n(I)f)$ is zero, and it follows from \cite[Theorem 2.4]{Perera} that all units in $\overline{f}M_n(\overline{R})\overline{f}$ lift to units in $fM_n(R)f$.
 \end{proof}

\begin{theorem}\label{T:weak lifting exchange}
Let $(R_i,\varphi_i)_{i \in \mathbb{Z}_{>0}}$ be an inverse system in the variety \emph{\textbf{Ring}}, where each $\varphi_i$ is surjective, and where each $R_i$ is a separative exchange ring.  For each $i$, let $E_i$ be a set of idempotents in $R_i$ such that the collection $\{ R_i e R_i : e \in E_i \}$ contains all trace ideals of $R_i$, and assume that for any idempotent $e \in E_{i+1}$, all units of $\varphi_i(e) R_i \varphi_i(e)$ lift to units of $e R_{i+1} e$.  Then the inverse limit is a separative exchange ring.
\end{theorem}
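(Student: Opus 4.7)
The plan is to combine the exchange ring case of Theorem~\ref{T:surjective} with the injectivity machinery of Proposition~\ref{Prop:MatrixUnitCornerLifting} and the separativity-transfer Corollary~\ref{C:SepSurjective}, using Proposition~\ref{P:LiftSepExch} as a bridge between the element-level lifting hypothesis of the statement and the matrix-level lifting required downstream. Setting $R := \varprojlim R_i$, the exchange half of Theorem~\ref{T:surjective} immediately gives that $R$ is an exchange ring, so the entire task reduces to showing that $R$ is separative.

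To upgrade the given lifting assumption, I would apply Proposition~\ref{P:LiftSepExch} to each separative exchange ring $R_{i+1}$ with ideal $I = \ker \varphi_i$ (so that $R_{i+1}/I \cong R_i$) and distinguished family $E = E_{i+1}$. The hypothesis that all units of $\varphi_i(e) R_i \varphi_i(e)$ lift to units of $e R_{i+1} e$ for every $e \in E_{i+1}$ is precisely condition~(2) of that proposition, so the equivalence (2)$\Leftrightarrow$(1) delivers condition~(1): for every $n \ge 1$ and every idempotent $f \in M_n(R_{i+1})$, units of $M_n(\varphi_i)(f) M_n(R_i) M_n(\varphi_i)(f)$ lift to units of $f M_n(R_{i+1}) f$.

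These matrix-level lifting conditions are exactly the hypotheses of Proposition~\ref{Prop:MatrixUnitCornerLifting}, whose conclusion is that the canonical $\Monu$-morphism $\eta \colon (V(R), [R]) \to (N, u)$ into the $\Monu$-inverse limit $(N,u)$ of $((V(R_i), [R_i]), V(\varphi_i))_{i \in \Z_{>0}}$ is injective. With this injectivity in hand, Corollary~\ref{C:SepSurjective} applies directly: since each $R_i$ is separative, separativity transfers from the compatible-sequence monoid $N$ through the injective $\eta$ to $V(R)$, and hence $R$ is separative.

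The main obstacle is precisely the passage from the element-level lifting assumption given in the statement to the matrix-level analogue needed for the $V$-functor comparison, since the separativity implication is phrased using classes of idempotents over arbitrary matrix rings. This bridge is exactly what Proposition~\ref{P:LiftSepExch} provides, via the $K_1$-theoretic identification of unit lifting with surjectivity of $K_1$ on corner rings together with the Morita equivalence between $eR_{i+1}e$ (for $e \in E_{i+1}$ generating the trace ideal $R_{i+1} e R_{i+1}$ of $fR_{i+1}^n$) and $fM_n(R_{i+1})f$. Once this bridge is installed, the remainder of the argument is an immediate assembly of earlier results.
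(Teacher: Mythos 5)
Your proposal is correct and follows exactly the route the paper takes: its entire proof of this theorem is the citation ``Proposition \ref{P:LiftSepExch} and Corollary \ref{C:SepSurjective}'', i.e.\ use Proposition \ref{P:LiftSepExch} (applied to each $R_{i+1}$ with $I=\ker\varphi_i$) to upgrade the corner-unit lifting on $E_{i+1}$ to the matrix-level lifting hypothesis of Proposition \ref{Prop:MatrixUnitCornerLifting}, and then invoke Corollary \ref{C:SepSurjective}. Your additional explicit appeal to Theorem \ref{T:surjective} for the exchange property of the limit fills in a step the paper leaves implicit.
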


\begin{proof}
Proposition \ref{P:LiftSepExch} and Corollary \ref{C:SepSurjective}.
\end{proof}
\thmskip

The upcoming paper \cite{AGNOPP} presents a comprehensive account of stable rank and cancellation properties of monoids under various hypotheses, along with applications to various classes of modules.
\thmskip

\section{An instructive example}
\thmskip

In this section, we aim to construct an example of a surjective inverse limit of regular rings $Q_i$ whose associated monoid does not match the inverse limit of the monoids of the $Q_i$, that is, $V(\varprojlim Q_i) \not\cong \varprojlim V(Q_i)$. This contrasts sharply with the case where the $Q_i$ are unit-regular (Theorem \ref{Thm:VInvLimit}).
 \prskip

We first recall the definition of a separated graph, following \cite{AG12}. We will use the notation in \cite{AG12} concerning directed graphs. See, for example, \cite{AAM} for general graph algebra results.  In particular, $s,r\colon E^1\to E^0$ will denote the source and range maps, respectively, of a graph $E=(E^0,E^1)$ with vertex set $E^0$ and edge set $E^1$.

 \begin{definition}[\cite{AG12}] \label{defsepgraph}
 	A \emph{separated graph} is a pair $(E,C)$ where $E$ is a directed graph,  $C=\bigsqcup
 	_{v\in E^ 0} C_v$, and
 	$C_v$ is a partition of $s^{-1}(v)$ (into pairwise disjoint nonempty
 	subsets) for every vertex $v$. (In case $v$ is a sink, we (necessarily) take $C_v= \emptyset $.)
 	
 	If all the sets in $C$ are finite, we say that $(E,C)$ is a \emph{finitely separated} graph.
	
	Corresponding to $(E,C)$ is a commutative \emph{graph monoid} $M(E,C)$ \cite[Definition 4.1]{AG12}.  When $(E,C)$ is finitely separated, $M(E,C)$ can be presented with $E^0$ as a set of generators and relations $v = \sum\{ r(e) : e \in s^{-1}(v) \}$ for $v \in E^0$ \cite[p.186]{AG12}.
 	 \end{definition}

   We now introduce a version of the construction used in \cite{ABP} (see also \cite{ABPS}). We fix a field $K$. We consider a particular class of finitely separated graphs, as follows.

  Let $(E,C)$ be a separated graph such that
  $$E^0 := \{ v\}\sqcup W,$$
  where $W$ is a finite or countable set. Moreover, let
  $$E^1:= \{ \alpha _i : 1\le i < N\} \sqcup \{ \beta_i^j : 1\le i< N, 1\le j \le g(i)\}$$
where $2\le N \le \infty$ and $1\le g(i) <\infty$ for all $i$. Moreover we set
$s(\alpha_i)= v= r(\alpha_i)$, $s(\beta_i^j)= v$ for all $i,j$, and
$W= \bigcup_{1\le i < N} \{r(\beta_i^j) : 1\le j \le g(i) \}$.

Now the set $C$ is defined as $C:=\{X_i : 1\le i < N\}$, where
$$X_i := \{\alpha _i,\beta_i^1,\dots , \beta_i^{g(i)}\}.$$

We consider a set of commuting indeterminates $\{t^w_i: i=1,2,3 \dots \}$ at each vertex $w\in W$, such that $wt^w_i = t^w_i = t_w^i w$, and the quotient field $L_w :=K(t^w_i)$ of the polynomial ring $K[t^w_i]$. The algebra $L_w$ has unit $w$ for all $w\in W$, and has a unique $K$-algebra involution $*$ such that $(t_i^w)^* = (t_i^w)^{-1}$ for all $i$.

Let $R$ be the $*$-algebra over $K$ generated by $E^0$, $E^1$ and $L_w$ for all $w\in W$, subject to the following relations:

\begin{enumerate}
	\item $uu'= \delta_{u,u'}u$ and $u^*= u$ for all $u,u'\in E^0$,
	\item $s(e)e= e= er(e)$ for all $e\in E^1$,
	\item $\alpha_i ^* \alpha _i= v$ for all $i$,
	\item $\alpha _i \alpha _j = \alpha_j \alpha_i$ and  $\alpha _i \alpha _j^* = \alpha_j^* \alpha_i$ for all $i\ne j$,
	\item $v= \alpha_i \alpha _i^* + \sum_{j=1}^{g(i)} \beta_i^j(\beta_i^j)^*$ for all $i$,
	 \item $(\beta_i^j)^* \beta_s^t = 0$ if $(i,j)\ne (s,t)$ and $(\beta_i^j)^* \beta_i^j = r(\beta_i^j)$ for all $i,j$,
	 \item $\alpha_i^*\beta_i^s= 0 = (\beta_i^s)^* \alpha_i$ for all $i,s$,
	 \item For all $i\ne j$, $\alpha_i\beta_j^s  = \beta_j^s t_i^{r(\beta_j^s)}$ and $\alpha_i^* \beta_j^s = \beta_j^s (t_i^{r(\beta_j^s)})^{-1}$.
	  \end{enumerate}
Note that these are relations as a $*$-algebra, so all the relations obtained by applying the involution in the above equations hold in $R$.

For each finite subset $F$ of $\{ i : 1 \le i < N \}$, we consider the separated graph $(E_F,C_F)$ obtained by considering only the edges in $\bigcup_{i \in F} X_i$ and the corresponding source and range vertices, and where $C_F := \{ X_i : i \in F \}$.
Let $u_F= \sum_{u\in E_F^0} u$. Then $u_F$ is the unit for the corresponding algebra $R_F$. Let $\Sigma_F$ be the set of all polynomials $f$ in $K[x_i : i \in F]$ such that $x_i$ does not divide $f$ for any $i\in F$. Since the $\alpha_i$, $i\in F$, are commuting elements of $vR_Fv$, there is an evaluation map $f\mapsto f(\alpha_i)$, and we may consider the universal localization
$$Q_K(E_F,C_F) := \Sigma_F^{-1}R_F,$$
which is a unital $K$-algebra.  If $F\subset F'$, there is a natural (not necessarily unital) algebra homomorphism $Q_K(E_F,C_F)\to Q_K(E_{F'}, C_{F'})$, and we can define the direct limit $Q_K(E,C) :=  \varinjlim Q_K(E_F,C_F)$. We set $\Sigma := \bigcup_F \Sigma_F$.

The $K$-algebra $Q_K(E,C)$ is unital if and only if $W$ is finite.

\begin{theorem}\label{T:Qreg}
	The $K$-algebra $Q_K(E,C)$ defined above is a strongly separative regular ring and moreover
	the natural map $M(E,C)\to  V (Q_K(E,C))$ is a monoid isomorphism. \rm{(A ring is \textbf{strongly separative} if $A \oplus A \cong A \oplus B  \Longrightarrow  A \cong B$ for f.g.\ projective modules $A,B$.)}
	\end{theorem}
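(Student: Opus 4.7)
The plan is to reduce to finite subgraphs and invoke the regular-algebra construction of \cite{ABP} (and \cite{ABPS}). Directly from the definitions, $R = \varinjlim_F R_F$ and $Q_K(E,C) = \varinjlim_F Q_K(E_F,C_F)$, with $F$ ranging over finite subsets of $\{\,i : 1 \le i < N\,\}$ and with the natural (possibly non-unital) transition maps. Von Neumann regularity, strong separativity, and isomorphisms of type $M \cong V(-)$ are all preserved under filtered direct limits of rings: regularity is a pointwise existential statement on elements, while strong separativity and the monoid isomorphism pass through the functor $V$, which commutes with filtered colimits; moreover, the presentation of graph monoids shows directly that $M(E,C) = \varinjlim_F M(E_F,C_F)$. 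Hence it suffices to prove the theorem for each $Q_K(E_F,C_F)$ with $F$ finite, together with compatibility of the natural maps $M(E_F,C_F) \to V(Q_K(E_F,C_F))$ with the transition maps, the latter being automatic by functoriality.

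For a fixed finite $F$, the algebra $R_F$ is of the kind studied in \cite{ABP}: relations (1)--(2) provide the vertex Peirce decomposition; (3) and (5) say that at each $i \in F$ the set $\{\alpha_i,\beta_i^1,\dots,\beta_i^{g(i)}\}$ is a Cuntz-type family of partial isometries whose range projections sum to $v$; (4) turns the $\alpha_i$'s into a commuting family; (6)--(7) impose the necessary orthogonality; and (8) introduces the twist by the indeterminates $t_i^w$, which become units after the localization. The set $\Sigma_F$ is designed so that $\Sigma_F^{-1}R_F = Q_K(E_F,C_F)$ inverts exactly the appropriate elements of the commutative subalgebra generated by the $\alpha_i$ for $i \in F$. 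By the corresponding result of \cite{ABP}, this localization is a strongly separative von Neumann regular $K$-algebra, and the natural map $M(E_F,C_F) \to V(Q_K(E_F,C_F))$ sending $[w] \mapsto [w\,Q_K(E_F,C_F)]$ is a monoid isomorphism.

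The main obstacle is verifying that the specific variant of the construction adopted here -- with commuting $\alpha_i$'s for $i \in F$, the twist relations (8), and the chosen $\Sigma_F$ -- is covered by, or is an immediate adaptation of, the results in \cite{ABP}. The nontrivial points are producing explicit inner inverses inside $Q_K(E_F,C_F)$ (which uses invertibility of elements of $K[\alpha_i : i \in F]$ and of the $t_i^w$) and checking that the graph monoid relations $v = [\alpha_i\alpha_i^*] + \sum_j [\beta_i^j(\beta_i^j)^*]$ in $V(Q_K(E_F,C_F))$ are the \emph{only} relations among the generators $\{[w] : w \in E_F^0\}$, which is where the results of \cite{ABP} are essential. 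Granted the finite case, passing to the colimit in $F$ concludes the proof: the monoid isomorphisms at finite stages assemble into an isomorphism $M(E,C) \cong V(Q_K(E,C))$, and regularity and strong separativity are inherited from the finite stages by the remarks in the first paragraph.
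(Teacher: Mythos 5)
Your reduction to finite $F$ and the appeal to \cite{ABP} for regularity and the monoid isomorphism $M(E_F,C_F)\cong V(Q_K(E_F,C_F))$ is exactly what the paper does, and the passage to the direct limit (regularity, strong separativity, and $V(-)$ all commuting with filtered colimits, together with $M(E,C)=\varinjlim_F M(E_F,C_F)$) is also the paper's argument. The gap is in the strong separativity of the finite stages: you assert that ``by the corresponding result of \cite{ABP}, this localization is a strongly separative von Neumann regular $K$-algebra,'' but the main result of \cite{ABP} is a realization theorem --- it delivers regularity and the identification of the $V$-monoid, not strong separativity. As it stands, nothing in your write-up actually establishes that $Q_F:=Q_K(E_F,C_F)$ is strongly separative, and this is the one part of the theorem that requires a genuine additional argument.

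The paper fills this in with a short structural argument: let $I_F$ be the ideal of $Q_F$ generated by $E_F^0\setminus\{v\}$. Then $Q_F/I_F\cong K(x_i:i\in F)$ is a field, and each corner $wQ_Fw\cong L_w$ is a field for $w\in E_F^0\setminus\{v\}$, so $I_F$ is semisimple. Both $I_F$ and $Q_F/I_F$ are therefore strongly separative regular rings, and the Extension Theorem for strong separativity (\cite[Theorem 5.5 and Proposition 1.4]{AGOP1}) yields strong separativity of $Q_F$. (Alternatively, since strong separativity of a regular ring is a property of its $V$-monoid, you could try to verify it directly in $M(E_F,C_F)$ once the isomorphism from \cite{ABP} is in hand, but you would then have to actually carry out that monoid computation; you do neither.) You should either import the paper's ideal-extension argument or supply the monoid-level verification; citing \cite{ABP} alone does not cover this point.
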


\begin{proof}
	For finite $F$, the $K$-algebra $Q_F := Q_K(E_F,C_F)$ is regular and the natural map $M(E_F,C_F) \to V (Q_F)$ is an isomorphism by the main result in \cite{ABP}. Passing to direct limits, we obtain regularity of $Q_K(E,C)$ and the stated monoid isomorphism.
	
	To see that $Q_K(E,C)$ is strongly separative, it suffices to show that each $Q_F$ is strongly separative.  Take $I_F$ to be the ideal of $Q_F$ generated by $E^0_F \setminus \{v\}$, and observe that $Q_F/I_F$ is isomorphic to the rational function field $K(x_i : i \in F)$.  Also $wQ_Fw \cong L_w$, a field, for each $w \in E^0_F \setminus \{v\}$.  Thus $I_F$ is semisimple.  Inasmuch as both $I_F$ and $Q_F/I_F$ are now strongly separative regular rings, by \cite[Theorem 5.5 and Proposition 1.4]{AGOP1} we must have $Q_F$ strongly separative.
\end{proof}

Observe that for finite $F\subset F'$, the map $Q_F\to Q_{F'}$ is injective. This is due to the fact that there is no vertex in $E_F$ which is sent to $0$, and the (classes of the) vertices generate the $V$-monoid of the regular ring $Q_F$. Accordingly, we will view $Q_F$ as a subalgebra of $Q_K(E,C)$ for all finite $F\subseteq \{ i : 1 \le i < N \}$.

By \cite[Theorem 2.12]{ABP}, the elements of $Q:=Q_K(E,C)$ are $K$-linear combinations of terms of the form $\gamma  \mathfrak m \nu^*$, where
$\gamma $ is a fractional $c$-path, $\mathfrak m $ is a fractional monomial and $\nu$ is a $c$-path. Here a fractional monomial at $v$ is an expression of the form
$$f^{-1} \prod_{i=1}^n \alpha_i^{k_i}(\alpha_i^*)^{l_i},$$
where $f\in \Sigma$, $k_i, l_i\ge 0$, and a fractional monomial at a vertex $w\in W$ is just a term of the form $f(t_i^w)\in L_w$. Fractional $c$-paths are either trivial (i.e.\ vertices) or paths of the form
$$f^{-1} \alpha_i^m \beta_i^j$$
for some $f\in \Sigma$, some $i$, $j$ and some $m\ge 0$. Moreover $c$-paths are of the same form as fractional $c$-paths, but with $f=1$. See \cite[Section 2]{ABP}.

We will need also some additional formulas, see \cite[Lemma 2.8]{ABP} and \cite[Lemma 2.9]{A2010} for the proof of the next two equations: For $f\in \Sigma$ we have
\begin{equation}
	\label{eq:first}
	(v-\alpha_i\alpha_i^*) f^{-1} = (f_0')^{-1}\rho^* (v-\alpha_i\alpha_i^*) = (v-\alpha_i\alpha_i ^*) (f_0')^{-1}\rho^*,
	\end{equation}

\begin{equation}
	\label{eq:second}
	\alpha_i^* f^{-1} = f^{-1}\alpha_i^* + f^{-1}(f_0')^{-1} g \rho^* (v-\alpha_i\alpha_i^*)
   	\end{equation}
where $\rho$ is a monomial in $\{ \alpha_j : j\ne i \}$, $f_0'\in K[\alpha_j: j\ne i]\cap \Sigma $, and $g\in K[\alpha_j]$.

Using these formulas we obtain, for $f\in \Sigma$

\begin{equation}
	\label{eq:third}
	(\beta_i^s)^* f^{-1} \in L_{r(\beta_i^s)} (\beta_i^s)^*
\end{equation}

\begin{equation}
\label{eq:fourth}
(v-\alpha_i\alpha_i^*)f^{-1} \in \sum _{s=1}^{g(i)} \beta_i^sL_{r(\beta_i^s)} (\beta_i^s)^*
\end{equation}

Relation \eqref{eq:third} is shown in the proof of \cite[Theorem 2.12]{ABP}. We now show relation \eqref{eq:fourth}:
$$(v-\alpha_i \alpha_i ^*)f^{-1} = \sum_{s=1}^{g(i)} \beta_i^s(\beta_i^s)^* f^{-1} \in \sum_{s= 1}^{g(i)} \beta_i^s L_{r(\beta_i^s)} (\beta_i^s)^*,$$
where we have used condition (5) above for the first equality and \eqref{eq:third} for the second.

  \begin{example}\label{E:Pere}
   We consider a family of separated graphs $(E_n,C^n)$, for $n\ge 0$ defined as follows. For $n=0$ we set
\begin{gather*}
E_0^0 :=\{v,w_0\}, \quad E_0^1 :=\{\alpha_i,\beta_i^1: i \in \Zpos \} ,  \\
r(\alpha_i)=s(\alpha_i) := v, \,\, s(\beta_i^1) := v, \,\, r(\beta_i^1) := w_0 \text{ for all } i \in \Zpos,  \\
C^0 := \{\{\alpha_i,\beta_i^1\} :  i \in \Zpos \}.
\end{gather*}
   For $n\ge 1$, we set
\begin{gather*}
E_n^0 := \{v,w_0,w_1,\dots ,w_n\}, \quad E_n^1 :=\{\alpha_i,\beta_i^1, \beta_j^2: i \in \Zpos,\ 1\le j\le n \} ,  \\
r(\alpha_i)=s(\alpha_i) := v, \quad s(\beta_i^1)= s(\beta_j^2) := v, \quad r(\beta_i^1) := w_0 ,  \\
 r(\beta_j^2) := w_j  \text{ for all } i \in \Zpos \text{ and all } 1\le j\le n,  \\
C^n := \{\{\alpha_j,\beta_j^1,\beta_j^2\}, \{\alpha_i,\beta_i^1\} :  1\le j \le n,\ i>n \}.
\end{gather*}
  Observe that $Q_K(E_n,C^n)$ is a unital $K$-algebra, with unit $v+\sum_{i=0}^n w_i$.
    We consider maps $\pi_n \colon Q_K(E_{n+1},C^{n+1})\to Q_K(E_n,C^n)$ sending $v$ to $v$, $w_i$ to $w_i$ for $0\le i\le n$ and $w_{n+1}$ to $0$. Similarly all the edges of $E_{n+1}$ are mapped to the corresponding edges in $E_n$, except for $\beta_{n+1}^2$, which is sent to $0$. It is straightforward to show that there is a surjective unital $K$-algebra homomorphism $\pi_n \colon Q_K(E_{n+1},C^{n+1}) \to Q_K(E_n,C^n)$ with these assignments.

    Set $Q:= \varprojlim Q_K(E_n,C^n)$. By Theorems \ref{T:Qreg} and \ref{T:surjective}, $Q$ is a unital regular $K$-algebra. However, $V(Q) \not\cong \varprojlim V(Q_K(E_n,C^n))$.
\end{example}
\thmskip

\noindent \emph{Proof.}
   To ease the notation, write $Q_n :=Q_K(E_n,C^n)$.  Our first, and most major, step is to show that the natural monoid homomorphism $\eta : V(Q) \rightarrow \varprojlim V(Q_n)$ is not injective.  To approach this, consider the sequences $e= (v,v,v,\dots)$ and $f = (v+w_0,v+w_0,v+w_0,\dots )$ in $Q$.   Observe that
  $$\eta([e]) = ([v],[v],\dots ) = ([v+w_0],[v+w_0],\dots ) = \eta([f])$$
  in $\varprojlim V(Q_n)$. This is due to the fact that for each $n$ we have
  $$v= \alpha_{i} \alpha_i^*+\beta_i^1(\beta_i^1)^* \sim v+w_0 $$
  in $Q_n$, for all $i>n$, since $\{\alpha_i,\beta_i^1\} \in C^n$ for $i>n$.

{\bf Claim 1:} $[e] \ne [f]$ in $V(Q)$, that is, $e$ is not equivalent to $f$ in $Q$.

  By way of contradiction, suppose that there are elements $x,y\in Q$ such that $x\in eQf$, $y\in fQe$ and $xy= e$, $yx= f$.

  Write $x= (x_0,x_1, \dots)$ and $y=(y_0,y_1,\dots )$, where $x_n,y_n \in Q_n$ and  $\pi_{n}(x_{n+1})= x_n$, $\pi_n (y_{n+1})= y_n$ for all $n$.   There exists a natural number $N$ such that all terms  appearing in the expressions of $x_0$ and $y_0$ belong to the subalgebra $Q^N_0 := Q_K(E_0^N,C_N^0)$ of $Q_0$, where
$$C_N^0 := \{ \{\alpha _i,\beta_i^1\}: 1\le i\le N\}$$
and $E_0^N$ is the subgraph of $E_0$ generated by the edges in $C_N^0$. Note that for all $n\ge 0$,
  $$x_{n+1} = \tilde{x}_n+ z_n, \qquad y_{n+1} = \tilde{y}_n+ q_n,$$
  where $\tilde{x}_n$ and $\tilde{y}_n$ are the elements of $Q_{n+1}$ obtained by the same expressions as the elements $x_n$ and $y_n$, but seen in the algebra $Q_{n+1}$, and $z_n,q_n\in Q_{n+1}w_{n+1}Q_{n+1}$.
   Therefore each of $z_n,q_n$ are sums of terms of the form $\gamma g(t_i^{w_{n+1}}) \nu^*$, where $\gamma $ is a fractional $c$-path ending at $w_{n+1}$ and $\nu$ is a $c$-path ending at $w_{n+1}$, and $g(t_i^{w_{n+1}})\in L_{w_{n+1}}$.

  Proceeding recursively, we can write
  $$x_N= \hat{x}_0 + \sum_{i=1}^N z_i',\qquad y_N= \hat{y}_0 + \sum_{i=1}^N q_i',$$
  where $\hat{x_0}, \hat{y}_0$ are elements of $Q_N$ which belong to the subalgebra $Q_N^N:= Q_K(E_N^N,C_N^N)$, where
  $$C_N^N := \{ \{\alpha _j,\beta_j^1,\beta_j^2\}: 1\le j\le N\}$$
  and $E_N^N$ is the subgraph of $E_N$ generated by the edges in $C_N^N$, and $z_i',q_i'\in Q_N w_iQ_N$.

  For $f\in \Sigma$, we set $u_{N,f}= fv+\sum_{i=0}^N w_i \in Q_N$ and $v_{N,f}= f^{-1}v+\sum_{i=0}^N w_i\in Q_N$. Observe that $u_{N,f}v_{N,f} = v+\sum_{i=0}^N w_i = v_{N,f}u_{N,f}$, and $v+\sum_{i=0}^N w_i $ is the unit of the algebra $Q_N$.

  We need the following lemma.

  \begin{lemma}
  	\label{lem:invariant-by-conjugation}
  	For $f\in \Sigma$ we have
  	$$u_{N,f} Q_N^N v_{N,f} \subseteq Q_N^N.$$
  	  \end{lemma}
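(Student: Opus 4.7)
The approach is to view $\phi_f \colon Q_N^N \to e Q_N e$, where $e := v + \sum_{i=0}^N w_i$ equals the unit $\mathbf{1}_{Q_N^N}$, defined by $\phi_f(q) := u_{N,f} q v_{N,f}$, as a unital $K$-algebra homomorphism and to verify that it sends a generating set of $Q_N^N$ into $Q_N^N$. Multiplicativity is automatic: for $q_1, q_2 \in Q_N^N$ one has $q_1 q_2 = q_1 e q_2 = q_1 v_{N,f} u_{N,f} q_2$, so $\phi_f(q_1 q_2) = \phi_f(q_1) \phi_f(q_2)$, using the fact that $v_{N,f} u_{N,f} = e = u_{N,f} v_{N,f}$.

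A convenient generating set for $Q_N^N$ consists of the vertices $v, w_0, \dots, w_N$, the edges $\alpha_j, \beta_j^1, \beta_j^2$ and their adjoints for $1 \le j \le N$, the elements of each $L_{w_k}$, and the inverses $g^{-1}$ for $g$ in the localizing set of $Q_N^N$. The routine cases, where $\phi_f$ acts as the identity, are: the vertices (direct computation from $u_{N,f} v = fv$, $v v_{N,f} = f^{-1} v$, $u_{N,f} w_k = w_k = w_k v_{N,f}$); the $\alpha_j$ for $j \le N$ and the localization inverses $g^{-1}$, by commutativity of the $\alpha_i$'s (relation (4)); and the elements of each $L_{w_k}$, which are absorbed by the $w_k$ idempotents on both sides.

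The nontrivial generators are $\beta_j^s$, $(\beta_j^s)^*$, and $\alpha_j^*$ with $j \le N$. Writing $f = \sum_r \alpha_j^r h_r$ with $h_r \in K[\alpha_i : i \in F,\ i \ne j]$, iterated use of relation (8) gives $h_r \beta_j^s = \beta_j^s h_r^{(t)}$, where $h_r^{(t)} \in L_{r(\beta_j^s)}$ is obtained by substituting $\alpha_i \mapsto t_i^{r(\beta_j^s)}$. Hence $\phi_f(\beta_j^s) = f \beta_j^s = \sum_r \alpha_j^r \beta_j^s h_r^{(t)} \in Q_N^N$. For $\phi_f((\beta_j^s)^*) = (\beta_j^s)^* f^{-1}$, the containment in $L_{r(\beta_j^s)} (\beta_j^s)^* \subseteq Q_N^N$ is exactly formula \eqref{eq:third}. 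For $\phi_f(\alpha_j^*) = f \alpha_j^* f^{-1}$, formula \eqref{eq:second} combined with left multiplication by $f$ yields $\phi_f(\alpha_j^*) = \alpha_j^* + h (v - \alpha_j \alpha_j^*)$ with $h := (f_0')^{-1} g \rho^*$; since $j \le N$, relation (5) gives $(v - \alpha_j \alpha_j^*) = \sum_s \beta_j^s (\beta_j^s)^*$, and the same push-through via (8) turns $h \beta_j^s$ into $\beta_j^s h^{(t)}$ with $h^{(t)} \in L_{r(\beta_j^s)}$, so the correction lies in $\sum_s \beta_j^s L_{r(\beta_j^s)} (\beta_j^s)^* \subseteq Q_N^N$.

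The main obstacle will be the bookkeeping of the push-through via relation (8), in particular verifying that the substituted elements $h^{(t)}$ and $h_r^{(t)}$ remain well-defined in $L_{r(\beta_j^s)}$ even when $f \in \Sigma$ involves indeterminates $\alpha_i$ with $i > N$. This is precisely the mechanism by which the ``excess'' variables of $f$, absent from $Q_N^N$ as elements of $Q_N$, get absorbed into the subalgebras $L_{w_k} \subseteq Q_N^N$, allowing conjugation by $u_{N,f}$ to preserve $Q_N^N$.
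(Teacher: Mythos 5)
Your proposal is correct and takes essentially the same route as the paper's proof: both reduce the statement to the three nontrivial computations $f\beta_j^s$, $(\beta_j^s)^*f^{-1}$ and $f\alpha_j^*f^{-1}$ via formulas \eqref{eq:second}, \eqref{eq:third} and relations (5), (8), the only difference being that the paper organizes the reduction around the normal form $\gamma\,\mathfrak{m}\,\nu^*$ rather than around an algebra generating set and the multiplicativity of conjugation. One small imprecision: in the $\alpha_j^*$ case the factor $g$ in $h=(f_0')^{-1}g\rho^*$ may itself involve $\alpha_j$, so $h\beta_j^s$ lands in $\sum_{u\ge 0}\alpha_j^u\beta_j^s L_{r(\beta_j^s)}$ rather than in $\beta_j^s L_{r(\beta_j^s)}$ --- still inside $Q_N^N$, so your conclusion is unaffected.
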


   \begin{proof}
   	First observe that using \eqref{eq:second}, \eqref{eq:third} and the defining relations of $Q_N$, we get, for a $c$-path $\nu = \alpha_i^t\beta_i^s$, $t\ge 0, s= 1,2$,
   	$$\nu^* f^{-1} = (\beta_i^s)^* (\alpha_i^*)^t f^{-1} \in \sum_{u=0}^\infty L_{r(\beta_i^s)} (\beta_i^s)^* (\alpha_i^*)^u \subseteq Q_N^N.$$
It is also clear that $f\lambda \in Q_N^N$ for a fractional $c$-path $\lambda$ in $Q_N^N$.

   	With this observation at hand, we only need to show that $f \mathfrak m f^{-1}\in Q_N^N$ for each fractional monomial $\mathfrak m $ at $v$. Such a fractional monomial is of the form
   	$$\mathfrak m = h^{-1} \prod _{i=1}^N \alpha_i^{k_i}(\alpha_i^*)^{l_i} $$
   	for $h\in \Sigma_N$ and $k_i,l_i\ge 0$. Hence it suffices to check that $f\alpha_i^*f^{-1} \in Q_N^N$
   	for $1\le i\le N$. By \eqref{eq:second}, we have
   	$$f\alpha_i^* f^{-1}  = \alpha _i^* + (f_0')^{-1} g\rho^* (v-\alpha_i\alpha_i^*),$$
   	where $f_0'\in K[\alpha_j: j\ne i]\cap \Sigma$, $\rho$ is a momomial in $\{ \alpha_j : j\ne i \}$, and  $g\in K[\alpha_j]$. Note that $(f_0')^{-1} \beta_i^s \in \beta_i^s L_{r(\beta_i^s)}$ because $f_0'\in K[\alpha_j: j\ne i]$. Hence using repeatedly relations (7) and (8) and the latter observation we obtain that
 \begin{align*}
 	(f_0')^{-1}g \rho ^* (v-\alpha_i \alpha_i^*) & = \sum _{s=1}^2   g(f_0')^{-1} \rho ^* \beta_i^s(\beta_i^s)^* \\
 	& \in  \sum _{s=1}^2   g(f_0')^{-1} \beta_i^sL_{r(\beta_i^s)}(\beta_i^s)^*\\
 	& \subseteq  \sum _{s=1}^2   g \beta_i^sL_{r(\beta_i^s)}(\beta_i^s)^* \\
 	& \subseteq \sum _{s=1}^2 \sum_{u=0}^{\infty}  \alpha_i^u \beta_i^sL_{r(\beta_i^s)}(\beta_i^s)^*\subseteq Q_N^N.
 \end{align*}
 It follows that $f \mathfrak m f^{-1} \in Q_N^N$, as desired.
   	    	     \end{proof}

  Returning to the proof of the Example, observe that there exists $f\in \Sigma$ such that for each $1\le i\le N$ we have
  $$z'_i = v_{f,N}  (\sum_j   \gamma_{ij} z_{ij}\nu_{ij}^*),\quad q_i' = v_{f,N} (\sum _j \lambda_{ij} q_{ij} \mu_{ij}^*),$$
  where $\gamma_{ij}, \nu_{ij},\lambda_{ij},\mu_{ij}$ are (possibly trivial) $c$-paths in $Q_N^N$ ending at $w_i$, and $z_{ij},q_{ij}\in L_{w_i}$, for all $1\le i\le N$.

  As we have seen in the proof of Lemma \ref{lem:invariant-by-conjugation}, $\nu^* v_{f,N}\in Q_N^N$ for every $c$-path $\nu$ in $Q_N^N$. Hence we obtain
  $$u_{f,N}z_i' v_{f,N} = \sum_j \gamma_{ij}z_{ij}(\nu_{ij}^* v_{f,N}) \in Q_N^N,\qquad u_{f,N}q_i' v_{f,N} = \sum_j \lambda_{ij}q_{ij}(\mu_{ij}^* v_{f,N}) \in Q_N^N $$
  for all $i\in \{1,\dots , N\}$.
    Since $\hat{x}_0,\hat{y}_0\in Q_N^N$, this together with Lemma \ref{lem:invariant-by-conjugation} gives that $u_{f,N} x_N v_{f,N}\in Q^N_N$
    and $u_{f,N}y_N v_{f,N} \in Q^N_N$. Now observe that
    $$(u_{f,N}x_N v_{f,N})(u_{f,N}y_N v_{f,N}) = u_{f,N} vv_{f,N} = v$$
    and similarly
  $$(u_{f,N}y_N v_{f,N})(u_{f,N}x_N v_{f,N}) = u_{f,N} (v+w_0)v_{f,N} = v+w_0.$$
  We thus obtain that $v\sim v+w_0$ in $Q_N^N$. However we know that
  $$V(Q_N^N) \cong M(E_N^N,C_N^N)= \langle v,w_0,w_1,\dots ,w_N\mid v= v+w_0+w_i, 1\le i\le N \rangle,$$ and thus $v\nsim v+w_0$ in $Q_N^N$. This contradiction shows that $e$ is not equivalent to $f$ in $Q$, proving Claim 1.
  \medskip

We next exhibit the structure of $V(Q)$, making use of the canonical isomorphisms $M(E_n,C^n) \to V(Q_n)$ given by Theorem \ref{T:Qreg}.  We treat these isomorphisms as identifications.

First, $V(Q_0) = \langle v, w_0 : v = v+w_0 \rangle$. For $n>0$, the monoid $V(Q_n)$ is generated by $v,w_0,\dots,w_n$ with relations $v = v+w_0+w_j$ for $1\le j\le n$ and $v = v+w_0$, so in fact
$$
V(Q_n) = \langle v,w_0,\dots,w_n : v = v+w_j\ \forall\; 0\le j\le n \rangle \quad \forall\; n \ge 0.
$$
Note that $V(Q_n)$ has an o-ideal $W_n := \sum_{j=0}^n \Z_{\ge 0}\,w_j$ such that
\begin{itemize}
\item $V(Q_n) = W_n \sqcup \Zpos\,v$;
\item $kv \ne lv$ for all distinct $k,l \in \Zpos$;
\item $v+w = v$ for all $w \in W_n$;
\item $W_n = \{ x \in V(Q_n) : x\ \text{is cancellative in}\ V(Q_n) \}$.
\end{itemize}

The homomorphisms $V(\pi_n) : V(Q_{n+1}) \to V(Q_n)$ send $v \mapsto v$ and $w_j \mapsto w_j$ for $j \le n$ while $w_{n+1} \mapsto 0$.  Consequently, the monoid $\Vhat := \varprojlim V(Q_n)$ has an o-ideal $W := \varprojlim W_n$ and an element $\vhat := (v,v,\dots)$ such that
\begin{itemize}
\item $\Vhat = W \sqcup \Zpos\,\vhat$;
\item $\vhat + w = \vhat$ for all $w \in W$;
\item $W = \{ x \in \Vhat : x\ \text{is cancellative in}\ \Vhat \}$.
\end{itemize}
In particular, $\Vhat \setminus W$ is \emph{cyclic} in the sense that every element of this set is a positive multiple of the single element $\vhat$.

To get corresponding information about $V(Q)$, we need some of the ideal theory of $Q$.  Observe that for each $n \ge 0$ we have row-exact commutative $K$-algebra diagrams
\[
\xymatrixcolsep{3pc}
\xymatrix{
	0 \ar[r]  &	I_{n+1} \ar[r]^-{\subseteq} \ar[d] & Q_{n+1}  \ar[r]^-{\rho_{n+1}} \ar[d]^{\pi_n} & L \ar[r]\ar[d]^{\text{id}_L}  & 0 \\
	0 \ar[r] &	I_{n} \ar[r]^-{\subseteq}  & Q_{n}   \ar[r]^-{\rho_n} & L \ar[r]	& 0
}
\]
 where $I_n$ is the ideal of $Q_n$ generated by $w_0,w_1,\dots,w_n$ and $L$ is the rational function field $K(x_i : 1 \le i < \infty)$. It follows immediately that there is a surjective $K$-algebra homomorphism $\rho\colon Q\to L$ defined by
 $\rho ((q_0,q_1,\dots)) = \rho_n (q_n)$, which is independent of $n$. The kernel of $\rho$
is the ideal $I:= \varprojlim I_n$. Now observe that  $\{I_n,\pi_n\}$ is a surjective inverse system of (non-unital) semisimple rings.  It follows from Theorem \ref{T:surjective}  that $pQp$ is unit-regular for each idempotent $p = (p_0,p_1,\dots) \in I$, and from Theorem \ref{Thm:VInvLimit} that $V(I)\cong \varprojlim V(I_i) \cong \prod_{\Z_{\ge 0}} \Z_{\ge 0}$.

Thus $V(I)$ is a maximal o-ideal of $V(Q)$, and all elements in $V(I)$ are cancellative in $V(Q)$.  The idempotent $e \in Q$ is not in $I$, and $[e]$ is not cancellative in $V(Q)$, e.g. because $[e] = [e] + [f] + [g]$ where $g := (0,w_1,w_1,\dots)$.  Since the set of cancellative elements of $V(Q)$ is an o-ideal, it follows from the maximality of $V(I)$ that
\begin{itemize}
\item $V(I) = \{ x \in V(Q) : x\ \text{is cancellative in}\ V(Q) \}$.
\end{itemize}

Finally, suppose that $V(Q) \setminus V(I)$ is cyclic, say equal to $\Zpos\,u$ for some element $u$.  Then $[e] = ku$ and $[f] = lu$ for some $k,l \in \Zpos$.  Applying $V(\pi_0)$, we find that $v= ku_0$ and $v = v+w_0 = lu_0$ for some $u_0 \in V(Q_0)$.  From the structure of $V(Q_0)$, it follows that $k=l=1$.  But then $[e] = [f]$, contradicting Claim 1.  Thus $V(Q) \setminus V(I)$ is not cyclic.

Therefore $\Vhat \not\cong V(Q)$.
 \hfill $\square$
 \prskip

 \begin{remark}
 The Example \ref{E:Pere} shows that the lifting of units hypothesis in Proposition \ref{Prop:MatrixUnitCornerLifting} is not superfluous because the map $\eta$ in the Example is not injective. Curiously, it can be shown however that $\eta$ is surjective.
 \end{remark}
 \prskip

By Theorem \ref{T:Qreg}, each $Q_i$ is strongly separative. Note however, that the $Q_i$ can't be unit-regular, otherwise by Theorem \ref{Thm:VInvLimit} the map $\eta$ would be a monoid isomorphism.
\prskip

Since the connecting maps in the inverse limit of the Example are surjective, each $Q_i$ is a homomorphic image of $Q$. So having the $Q_i$ separative (resp.\ strongly separative) is necessary for the algebra $Q$ to be separative (resp.\ strongly separative). In fact, $Q$ is strongly separative. As shown in the proof above, $Q/I$ is a field and $V(I)$ is cancellative.
It follows from \cite[Theorem 5.5]{AGOP1} that $Q$ is strongly separative.
\prskip

Thus $Q$ is unit-regular by unit-regular, although $Q$ can't be unit-regular, since its quotients $Q_i$ are not unit-regular. Still, despite the complexity of the construction of Example \ref{E:Pere}, the resulting algebra $Q$ is ring-structurally still fairly nice. Of course, this raises the question of what additional complexities might one impose on separated graph algebras to get a nonseparative algebra via a construction similar to our Example? When one question ends, another begins --- a recurring pattern in mathematics!
\thmskip


\section{An intermediate step}\label{Section:FinalThoughts}
\thmskip

A \textbf{strongly separative} regular ring $R$ satisfies the property
\[
    A \ \oplus \ A \  \cong \  A \ \oplus \ B \  \ \Longrightarrow \ A \ \cong \ B
\]
for f.g. projective $R$-modules $A, B$, and more generally but equivalently the cancellation
\[
A \ \oplus \ C \  \cong \  B \ \oplus \ C \ \Longrightarrow \ A \ \cong \ B
\]
for f.g.\ projective $R$-modules $A,B,C$ when $C$ is isomorphic to a direct summand of a finite direct sum of copies of $A$. See \cite[Lemma 5.1]{AGOP1}. These rings are necessarily separative but not conversely because strong separativity implies the stable rank of $R$ is 1 or 2. See \cite[Theorem 3.3]{AGOP1}.
\prskip

\begin{question}\label{SSP}
Can we construct an inverse limit $\varprojlim R_i$ of strongly separative regular (resp. exchange) rings $R_i$ that is regular (resp. exchange) but not strongly separative?
\end{question}
\prskip

A positive answer to this question could well show the way to answering the Separativity Problem itself. In fact, it could even be that Question \ref{SSP} and its separative regular counterpart (in terms of inverse limits) will stand or fall together (both positive answers or both negative answers). Since a negative answer involving an inverse limit $\varprojlim R_i$ of strongly separative regular rings $R_i$ is a subring of $\prod R_i$, with the latter a strongly separative regular ring, at a minimum we would need to have an explicit example of a regular subring $R$ of a strongly separative regular $Q$ such that $R$ is not strongly separative (but almost certainly separative). Notice that in the ring $Q$ used in 4.3, \emph{all} regular subrings $R$ of $Q$ are strongly separative, because $Q$ is prime with nonzero socle and is a field modulo its socle. Therefore, $R$ will be an extension of its socle by a field, both strongly separative, whence so is $S$ by the Extension Theorem for strongly separative regular rings. On the other hand, the regular ring used in Construction 4.7 won't do as a suitable $Q$ because it is not strongly separative (for all corners of regular strongly separative rings have stable rank at most 2). Without the ``explicit'' requirement, we can achieve this initial minimum goal, but so far not otherwise. However, we can construct an explicit, non-strongly-separative exchange subring $R$ of a very nice unit-regular ring $Q$. Here are the details.
\prskip

Let $F$ be any field.  Let $B(F)$ be the ring of $\omega\times \omega$ row-and-column-finite matrices over $F$.  This ring is known to be an exchange ring by \cite[Theorem 1]{O1}; however, it is never regular.  Following the work done in \cite[p.\ 413--414]{GMM}, which follows earlier work of Tyukavkin, let
\[
R:=\biggl\{x=(x_n)_{n\in \Z_{>0}}\in \prod_{n\in \Z_{>0}}M_n(F)\, :\, \text{\emph{the rows and columns of the $x_n$ stabilize}}\biggr\}.
\]
There is a beautiful surjective algebra homomorphism of $R$ onto $B(F)$ given by the rule
\[
 t : R \longrightarrow B(F), \ \ \ x \ \longmapsto \ \ \mbox{\emph{matrix of stabilized rows and columns}}.
\]
Further, as explained in \cite{GMM}, the kernel of this Tyukavkin map $t$  is a (huge) unit-regular ideal.  Since unit-regular ideals are (nonunital) exchange rings, relative to which idempotents lift, and $B(F)$ is an exchange ring, $R$ is also an exchange ring.
\prskip

Let $B = B(F)$. It is straightforward to show $B \, \oplus B \, \cong B \, \cong B \, \oplus \ 0$, whence $B$ is not strongly separative. Therefore, $R$ is not  strongly separative either because it has $B$ as a homomorphic image. From \cite[Corollary 1.9]{APP} we know $B$ is separative and therefore so is $R$ by the Extension Theorem for separative regular rings. Finally, $R$ sits explicitly inside the unit-regular algebra $Q = \prod_{n\in \Z_{>0}}M_n(F)$. Initial minimum goal achieved at least for exchange rings.
\thmskip
\prskip

Our constructions in Section 4, showing failure of various regular ring properties to be preserved in inverse limits, used variations of a Bergman argument involving the fixed ring of a set of automorphisms. This method will not work, however, in showing for example the non-strongly separative $R$ above can be obtained as an inverse limit of strongly separative regular rings  --- for the simple reason that $R$ can't be the fixed ring of a set of automorphisms of some  subalgebra $S$ of $Q$ containing $R$. The argument is as follows. Let $I = \mbox{soc}(Q) = \bigoplus_{n=1}^\infty M_n(F)$. Note $\mbox{soc}(R) = \mbox{soc}(S) = I$ as well. Therefore an automorphism $\theta$ of $S$ induces an automorphism of $I$ and of each of the homogeneous components $M_n(F)$ when identified with $e_nS$ for the central idempotent $e_n = (0,0, \ldots, 1,0, \ldots \ )$ \,(because the homogeneous components have different lengths, they are not non-trivially permuted). In particular, $\theta(e_n) = e_n$. Hence the action of $\theta$ is completely determined by its restrictions to the $e_nS$, since the components of $\theta(s)$, for $s \in S$, are given by $\theta(s)(e_n) = \theta(se_n)$, thus determining $\theta(s)$. Therefore, if $\theta$ is a nontrivial automorphism of $S$, its fixed ring when restricted to some $e_kS$ must be a proper subalgebra of $e_kS$. But now the fixed ring of $\theta$ can't contain $I$. In particular, it can't fix $R$. So we need to develop other techniques.
\prskip

\emph{In summary, perhaps our paper has at least established a base camp from which our Mt.\ Everest, the Separativity Problem, might be conquered --- but watch out for crevasses.}
\prskip

\section*{Acknowledgements}
\prskip

{\small{We thank Pace Nielsen for his helpful suggestions. Pere Ara and Francesc Perera were partially supported by the Spanish State Research Agency (grant No.\ PID2020-113047GB-I00/AEI/10.13039/501100011033), by the Comissionat per Universitats i Recerca de la Generalitat de Catalunya (grant No.\ 2017-SGR-1725) and by the Spanish State Research Agency through the Severo Ochoa and Mar\'ia de Maeztu Program for Centers and Units of Excellence in R\&D (CEX2020-001084-M).  The research of Ken Goodearl was partially supported by US National Science Foundation grant DMS 1601184.  Kevin O'Meara thanks the Mathematics Departments of the University of California, Santa Barbara (USA), and of both the Universitat Autonoma de Barcelona  and Universidad de C\'adiz (Spain) for their hospitality during the early part of our project. Enrique Pardo was partially supported by PAI III grant FQM-298 of the Junta de Andaluc\'ia, by the DGI-MINECO and European Regional Development Fund, jointly, through grant PID2020-113047GB-I00, and by the grant ``Operator Theory: an interdisciplinary approach'', reference ProyExcel 00780, a project financed in the 2021 call for Grants for Excellence Projects, under a competitive bidding regime, aimed at entities qualified as Agents of the Andalusian Knowledge System, in the scope of the Plan Andaluz de Investigaci\'on, Desarrollo e Innovaci\'on (PAIDI 2020), Consejer\'ia de Universidad, Investigaci\'on e Innovaci\'on of the Junta de Andaluc\'ia.}}

\prskip

\bibliographystyle{amsplain}

\end{document}